\newcommand{\reals}{\mathbb{R}}
\newtheorem{theorem}{Theorem}[section]
\newtheorem{lemma}[theorem]{Lemma}
\newtheorem{claim}[theorem]{Claim}
\newtheorem{corollary}[theorem]{Corollary}
\theoremstyle{definition}
\newtheorem{definition}[theorem]{Definition}
\newtheorem*{lemma-againa}{Lemma \ref{L-SM2}}
\newtheorem*{lemma-againb}{Lemma \ref{L-extreme}}
\newcommand{\ignore}[1]{}
\newcommand{\eq}{\;=\;}
\renewcommand{\d}{d}
\newcommand{\NN}{d}
\newcommand{\C}{C}
\newcommand{\mm}[1]{\mu_{\max}\{#1\}}
\newcommand{\MM}[1]{\mu_{\max}\left\{#1\right\}}
\newcommand{\on}[2]{\mathop{\null#2}\limits^{#1}}
\newcommand{\fvec}[1]{{\on{\,{}_\rightarrow}{{#1}}}}
\newcommand{\bvec}[1]{{\on{{}_\leftarrow}{{#1}}}}
\newcommand{\bfvec}[1]{{\on{\,{}_\leftrightarrow}{{#1}}}}
\newcommand{\M}{M}
\newcommand\mtcom[1]{}
\newcommand{\sgn}{\operatorname{sgn}}
\newcommand{\tint}[1]{\int_{t=-\frac12}^\frac12 {#1}\; \d t}
\begin{document}
\title{Maximum Overhang}

\author{
\em Mike Paterson
\thanks{Department of Computer Science, University of Warwick,
Coventry CV4 7AL, UK. E-mail: {\tt msp@dcs.warwick.ac.uk}}
\and \em Yuval Peres \thanks{Department of Statistics,
University of California, Berkeley, California 94710, USA.  E-mail:
{\tt peres@stat.berkeley.edu}} \and \em Mikkel Thorup \thanks{AT\&T
Labs - Research, 180 Park Avenue, Florham Park, NJ 07932, USA.
E-mail: {\tt mthorup@research.att.com}} \and \em Peter Winkler
\thanks{Department of Mathematics, Dartmouth College, Hanover,
NH 03755-3551, USA. E-mail: {\tt peter.winkler@dartmouth.edu}}
\and \em Uri Zwick \thanks{School of Computer Science, Tel Aviv
University, Tel Aviv 69978, Israel. E-mail: {\tt zwick@cs.tau.ac.il}} }

%\date{}

\maketitle

\begin{abstract}\noindent%
\setlength{\parindent}{0pt}%
\setlength{\parskip}{4pt plus 1pt}%
How far can a stack of $n$ identical blocks be made to hang over the
edge of a table?  The question dates back to at least the middle of
the 19th century and the answer to it was widely believed to be of
order $\log n$. Recently, Paterson and Zwick constructed $n$-block
stacks with overhangs of order $n^{1/3}$, exponentially better than
previously thought possible.
%
%PW: I cut down on the boasting here and elsewhere, on the ground that
%this is more of a mathematics than a CS paper.  The math tradition is
%to allow the reader to draw his own conclusions, for the most part,
%about which questions are interesting and which are finally solved.
%
%But, is this the final answer? Maybe even larger overhangs are possible?
%
We show here that order $n^{1/3}$ is indeed best possible, resolving the
long-standing overhang problem up to a constant factor.
\end{abstract}

\section{Introduction} \label{sec:intro}

The problem of stacking~$n$ blocks on a table so as to achieve
maximum overhang has a long history. It appears in physics and
engineering textbooks from as early as the mid 19th century (see,
e.g., \cite{P50}, \cite{W55}, \cite{M07}). The problem was apparently
first brought to the attention of the mathematical community in 1923
when J.G. Coffin posed it in the ``Problems and Solutions" section
of the American Mathematical Monthly~\cite{C23}; no solution was
presented there.

%PW: the suggested change (below) for the first three sentences
%would, in my opinion, make the reader worry too early about what
%"balanced" means.  The verb "to stack" already carries the implication
%that the result must not fall.

%The problem of stacking n blocks on a table, in a balanced way, so
%as to achieve maximum overhang has a long history. Some balanced
%stacks composed of three or four blocks that achieve relatively
%large overhangs are shown in Figure 1. The overhang problem appears
%in physics and engineering textbooks from as early as the mid 19th
%century (see, e.g., [M1907],[P1850],[W1855]).

\begin{figure}[here]
\begin{center}
\includegraphics[height=50mm]{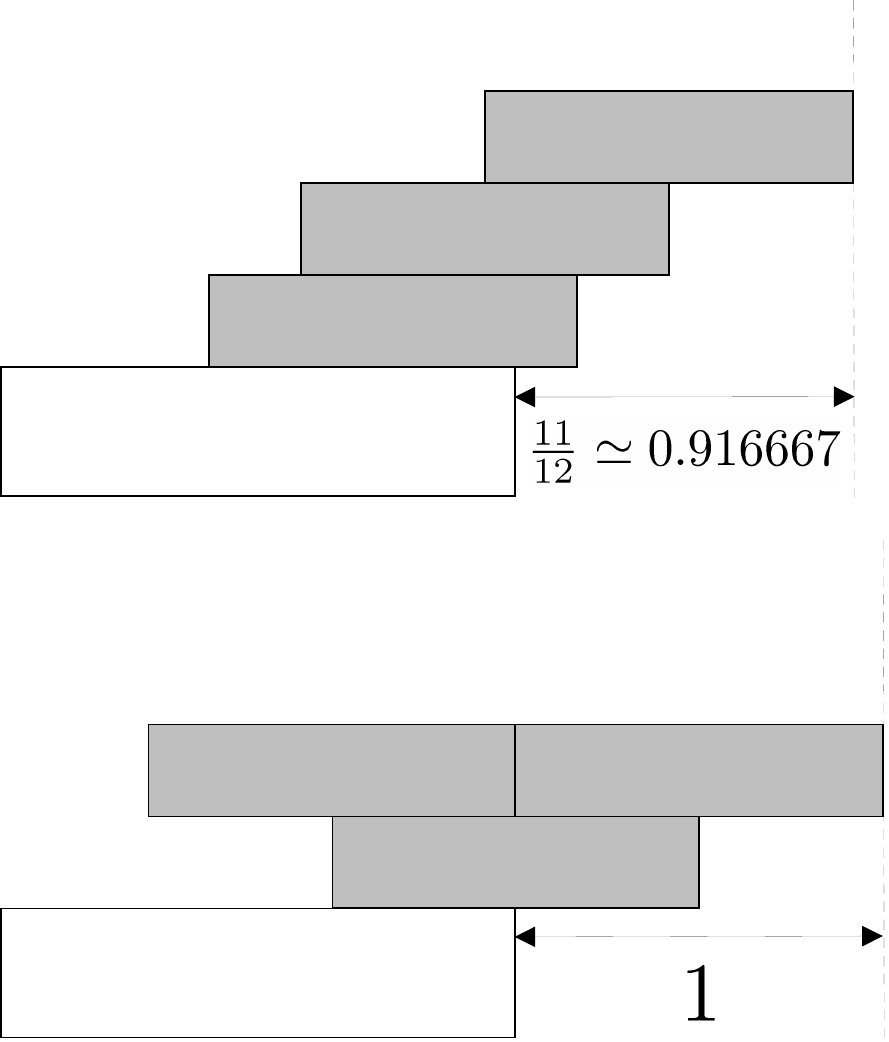}\hspace*{1cm}
\includegraphics[height=50mm]{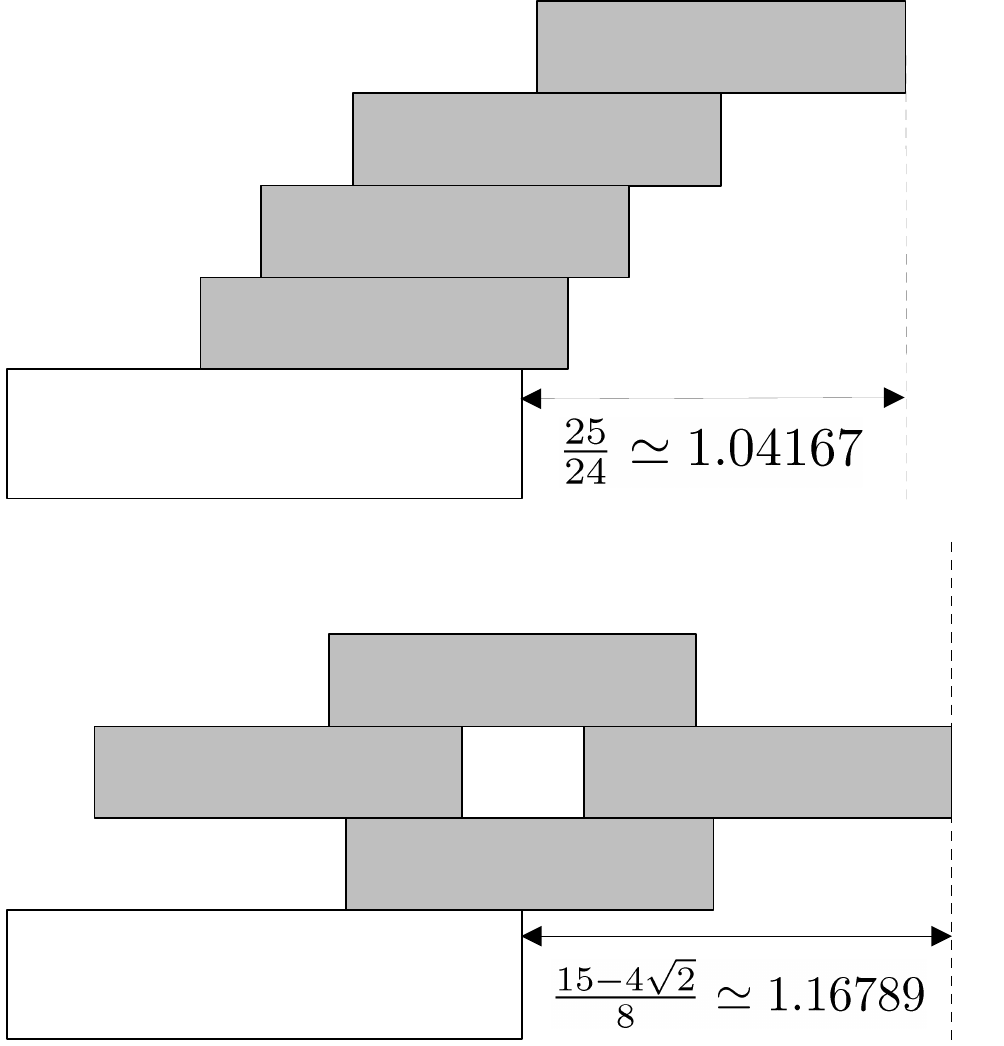}
\caption{Optimal stacks with 3 and 4 blocks, compared to the
corresponding harmonic stacks.
%MTnew: added a reference to Ainley to match the reference to PZ in
%Figure 3....want to make clear that we just listing previous
%developments with due referencing rather than reselling PZ.
The 4 block solution is from \cite{A79}. Like
the harmonic stacks it can be made stable by minute displacements.} \label{fig:opt34}
\end{center}
\end{figure}

\begin{figure}[t]
\begin{center}
\includegraphics[width=80mm]{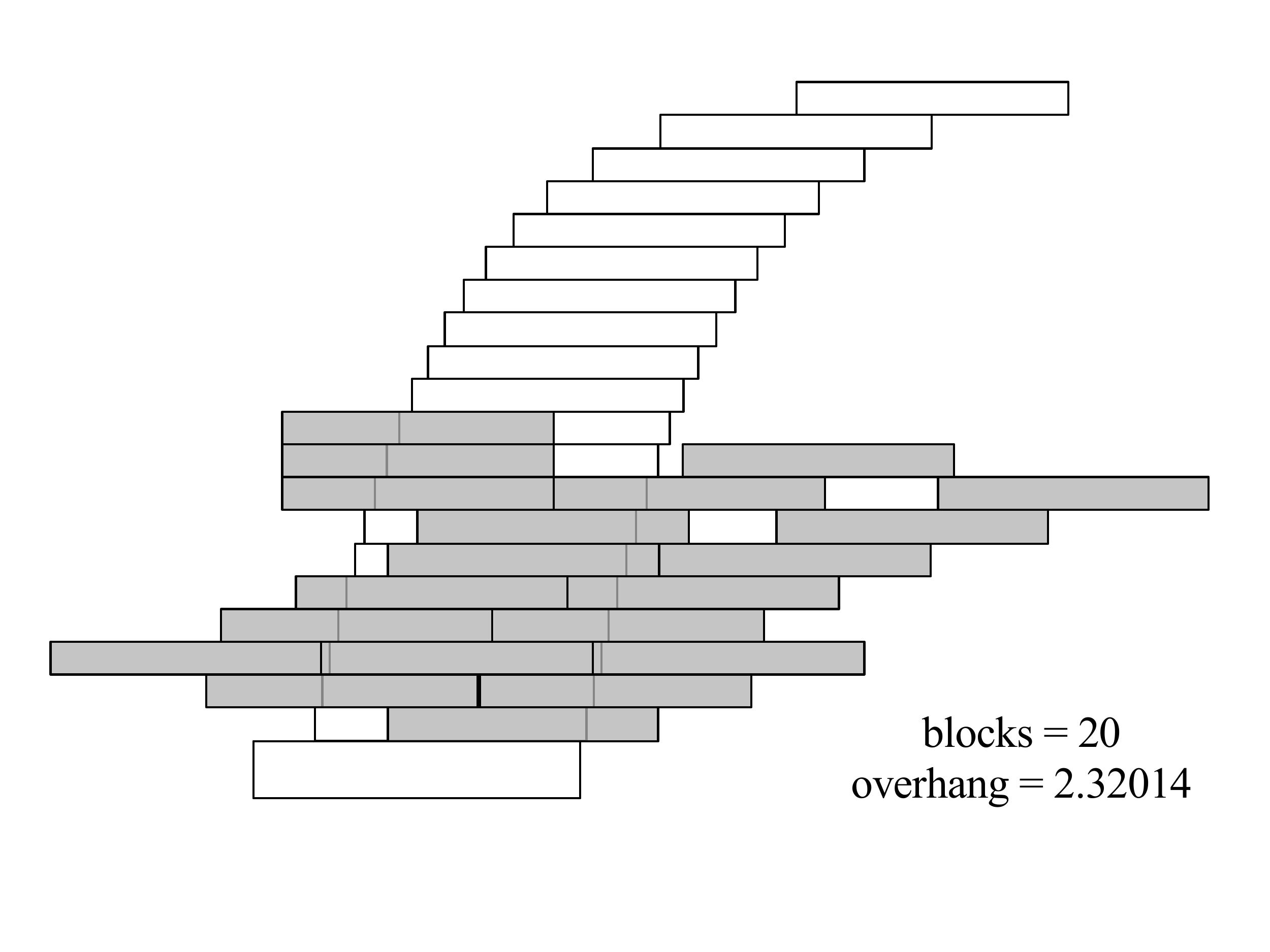}
\includegraphics[width=80mm]{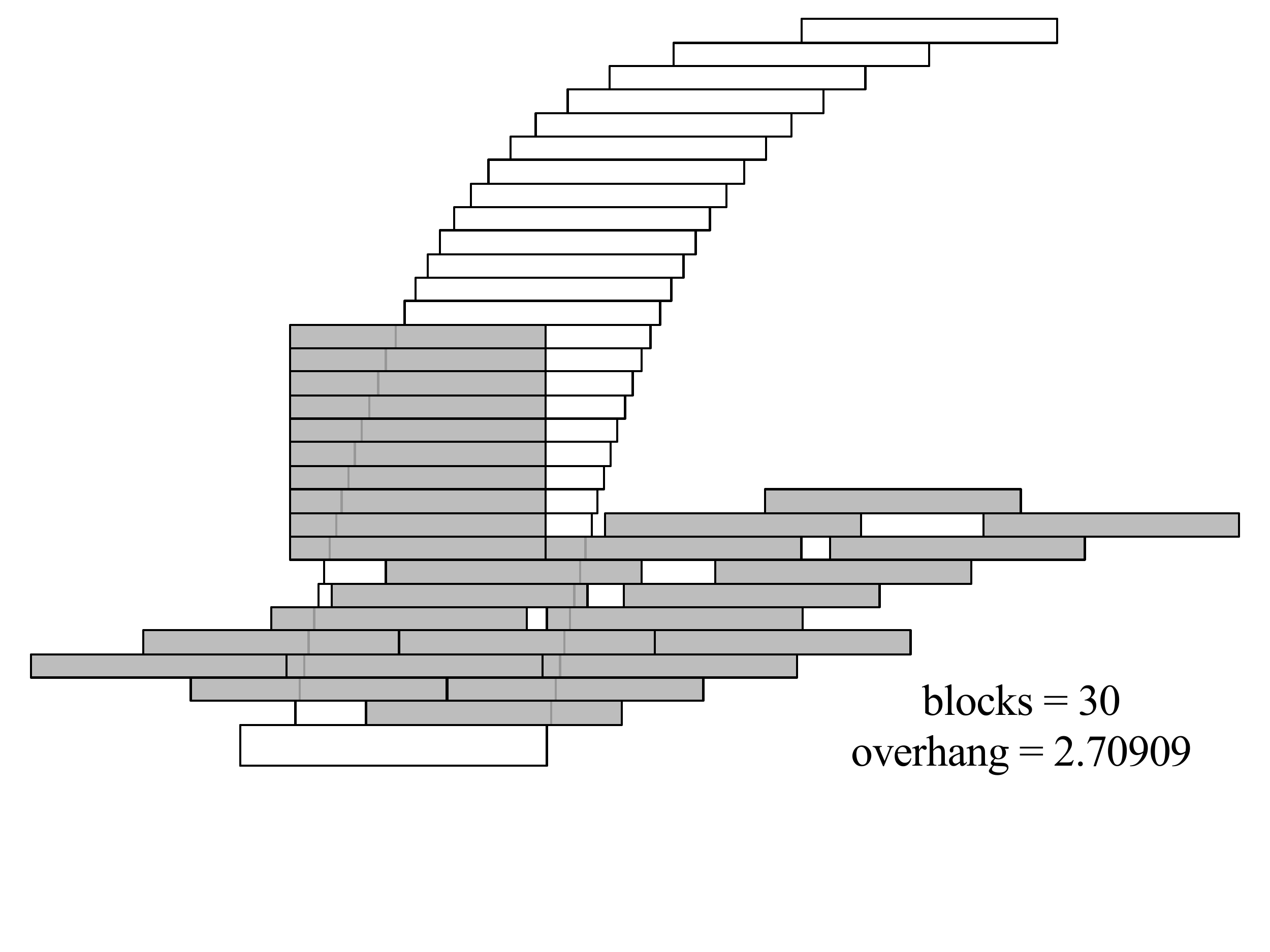}
\caption{Optimal stacks with 20 and 30 blocks
from~\cite{PZ06} with corresponding harmonic stacks in the background.} \label{fig:PZ}
\end{center}\vspace{-5mm}
\end{figure}

%PW: This figure was moved up to the first page to help hook the
%reader, i.a.w. Mikkel's suggestion.  Almost any figure would do.

%UZ: Changed decades to years.
The problem recurred from time to time over subsequent years, e.g.,
\cite{S53, S54, S55, J55, GS58, E59, G64, G71, A79, D81, GKP88,
H05}, achieving much added notoriety from its appearance in 1964 in
%UZ2: Added all available references.
Martin Gardner's ``Mathematical Games" column of \emph{Scientific
American} \cite{G64,G71}.
%PW: I combined citations and threw in our unused Drummond reference.

Most of the references mentioned above describe the now classical
\emph{harmonic stacks} in which $n$ unit-length blocks are placed
one on top of the other, with the $i^{\rm th}$ block from the top
extending by $\frac{1}{2i}$ beyond the block below it. The overhang
achieved by such stacks is $\frac12
H_n=\frac12\sum_{i=1}^n\frac{1}{i}\sim \frac12\ln n$. The cases
$n=3$ and~$n=4$ are illustrated at the top of Figure~\ref{fig:opt34}
above, and the cases $n=20$ and $n=30$ are shown in the background
of Figure~\ref{fig:PZ}.
%UZ: Added reference to the harmonic stacks at the background.
Verifying that harmonic stacks are \emph{balanced} and can be made
\emph{stable} (see definitions in the next section) by minute
displacements is an easy exercise. (This is the form in which
the problem appears in \cite{P50}, \cite{W55}, \cite{M07}.) Harmonic
stacks show that arbitrarily large overhangs can be
achieved if sufficiently many blocks are available. They have been
used by countless teachers as an introduction to recurrence
relations, the harmonic series and simple optimization problems
(see, e.g., \cite{GKP88}).

\subsection{How far can you go?}
%MTnew: added title to indicate the begining of the optimization version,
%getting as far as possible with n blocks.
Many readers of the above mentioned references were led to believe
that $\frac12 H_n (\sim \frac12\ln n)$, the overhang achieved by
harmonic stacks, is the \emph{maximum} overhang that can be achieved
using~$n$ blocks. This is indeed the case under the restriction,
explicit or implicit in some of these references, that the blocks
should be stacked in a \emph{one-on-one} fashion, with each block
resting on at most one other block. It has been known for some time,
however, that larger overhangs may be obtained if the one-on-one
restriction is lifted. Three blocks, for example, can easily be used
to obtain an overhang of~1.
Ainley \cite{A79} found that four blocks can
be used to obtained an overhang of about 1.16789, as shown at the
bottom right of Figure~\ref{fig:opt34}, and this is more than 10\% larger
than the overhang of the corresponding
harmonic stack.
%MTnew: said something about computers
Using computers, Paterson and Zwick \cite{PZ06} found the optimal
stacks with a given limited number of blocks. Their solutions
with 20 and 30 blocks are shown in Figure~\ref{fig:PZ}.

% It does improve over the 4-block
% harmonic tower, but only marginally.

Now what happens when $n$ grows large? Can general stacks, not
subject to the one-on-one restriction, improve upon the overhang
achieved by the harmonic stacks by more than a constant factor, or
is overhang of order $\log n$ the best that can be achieved? In a
recent cover article in the \emph{American Journal of Physics}, Hall
\cite{H05} observes that the addition of counterbalancing blocks to
one-on-one stacks can double (asymptotically) the overhang
%UZ: Changes stack to stacks
obtainable by harmonic stacks.  However, he then incorrectly
concludes that no further improvement is possible, thus perpetuating
the order $\log n$ ``mythology''.

%PW: What Hall added his counterbalancing blocks to was not exactly
%the "classical harmonic construction", so I rephrased while still
%resisting the temptation to expound on spinal stacks.

%PW: At the end of the paragraph (as elsewhere) I left in quote-followed-by-period
%which mathematicians use when it is logical to do so, but if the
%article is typeset it will most likely be switched.

% As the improvements shown in Figure~\ref{fig:PZ} are fairly modest,
% it may be tempting to conjecture that the maximum overhang
% achievable using $n$ block is still of order $\log n$

% As recently as December 2005, an article in {\em
% Amer.\ J. Physics} \cite{H05} appeared to confirm the $\log n$
% mythology by observing that the addition of counterbalancing blocks
% seemed to improve maximum $n$-block overhang only by about a factor
% of 2.

Recently, however, Paterson and Zwick \cite{PZ06} discovered that
the modest improvements gained for small values of~$n$ by using
layers with multiple blocks mushroom into an exponential improvement
for large values of~$n$, yielding overhang of order $n^{1/3}$
instead of just $\log n$.

\subsection{Can we go further?}
%MTnew added this title
But is $n^{1/3}$ the right answer, or is it just the start of
another mythology? In their deservedly popular book {\em Mad About
Physics} \cite{JP01}, Jargodzki and Potter rashly claim that
inverted triangles (such as the one shown on the left of
%UZ: Changes pyramids to triangles and stable to balanced.
Figure~\ref{fig:pyramid}) are balanced.  If so, they would achieve
overhangs of order $n^{1/2}$.
%UZ: Changes $\frac12 n^{1/2}$ to order $n^{1/2}$. The constant 1/2 here is wrong.
It turns out, however, that already the $3$-row inverted triangle is
unbalanced, and collapses as shown on the right of
Figure~\ref{fig:pyramid}, as do all larger inverted triangles.

%UZ: I replaced Mike's 10-triangle by a 3-triangle and merged two figures.
\begin{figure}[t]
\begin{center}
\includegraphics[width=50mm]{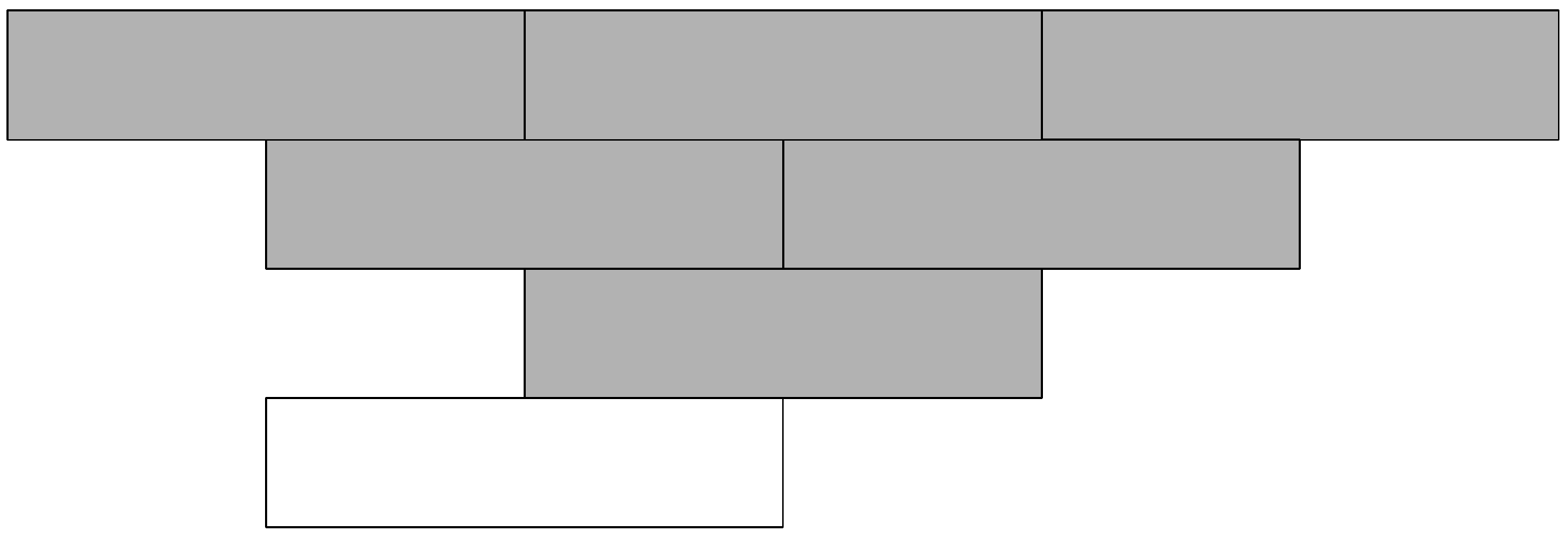}\hspace*{1cm}
\includegraphics[width=50mm]{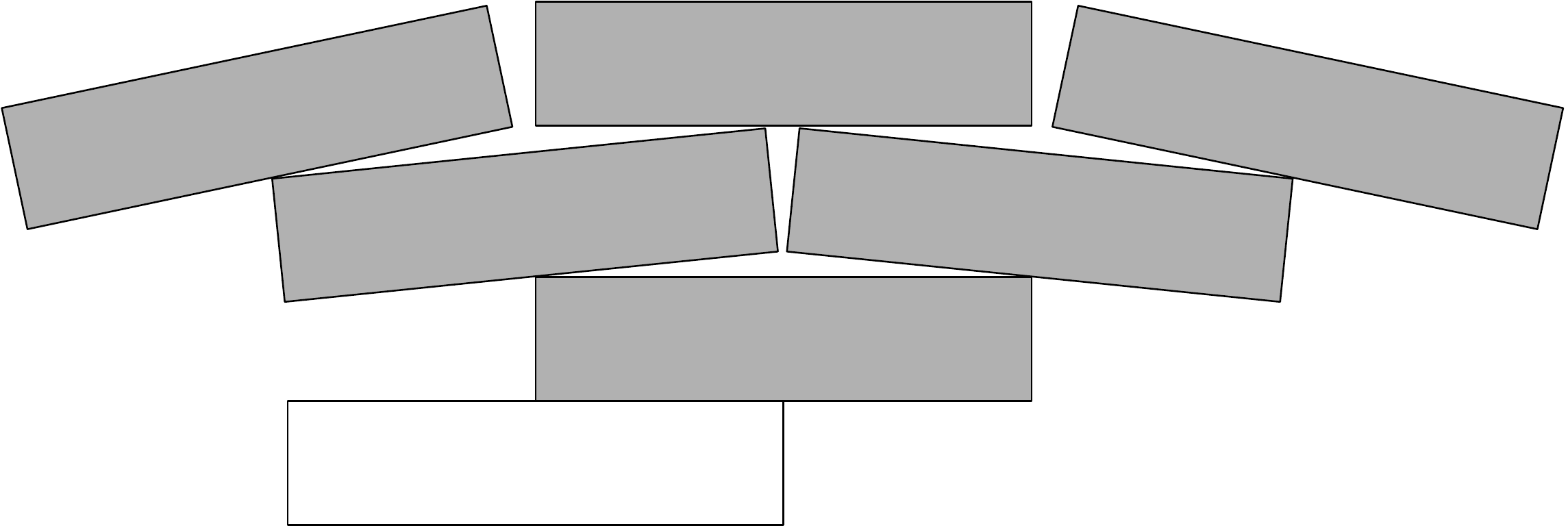}
\caption{A 3-row inverted triangle is
unbalanced.}\label{fig:pyramid}
\end{center}\vspace{-5mm}
\end{figure}

% \begin{figure}[h]
% \begin{center}
% \includegraphics[width=80mm]{pyr3}
% \caption{The 3-row inverted triangle in unbalanced!}\label{fig:pyr3}
% \end{center}\vspace{-5mm}
% \end{figure}

The collapse of the $3$-row triangle begins with the lifting of the
middle block in the top row. It is tempting to try to avoid this
failure by using a diamond shape instead as illustrated in
Figure~\ref{fig:collapse}. Diamonds were considered by
Drummond~\cite{D81}, and like the inverted triangle, they would
achieve an overhang of order $n^{1/2}$, though with a smaller
leading constant.
%MTnew: Added statement about order n^{1/2}
The stability analysis of diamonds is slightly more
complicated than that of inverted triangles,
but it can be shown that $d$-diamonds, i.e., diamonds that
have $d$ blocks in their largest row, are stable if and only if
$d<5$. In Figure~\ref{fig:collapse} we see a practical demonstration
with $d=5$.
%UZ: Added a definition of d-diamonds.

\begin{figure}[h]
\begin{center}
\includegraphics[width=70mm]{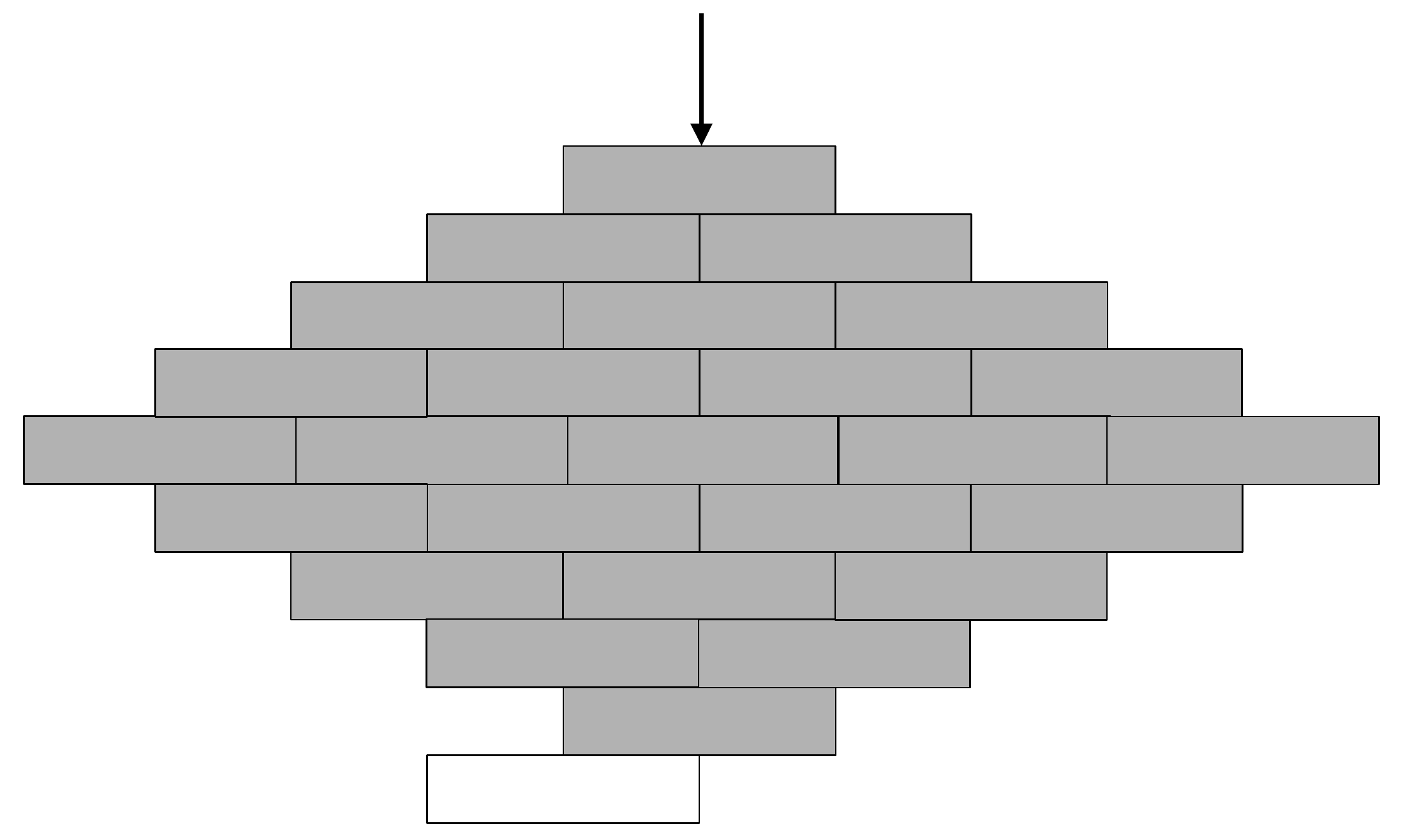}\hspace{20mm}
\includegraphics[width=70mm]{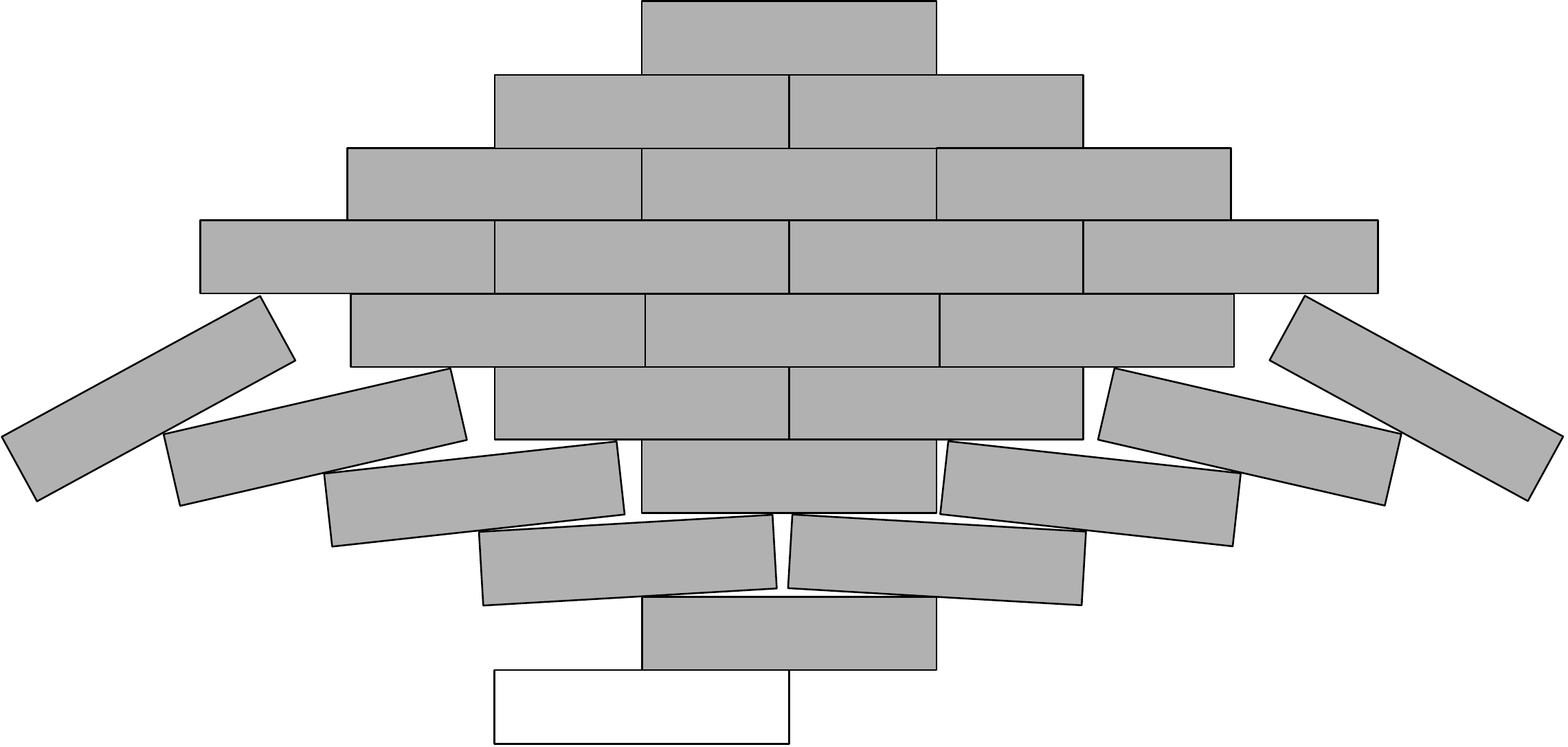}

\vspace*{0.5cm}
\includegraphics[width=70mm]{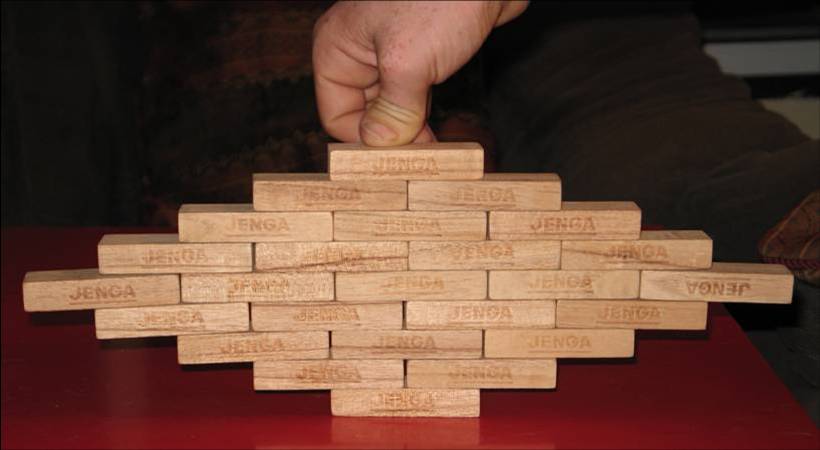}\hspace{20mm}
\includegraphics[width=70mm]{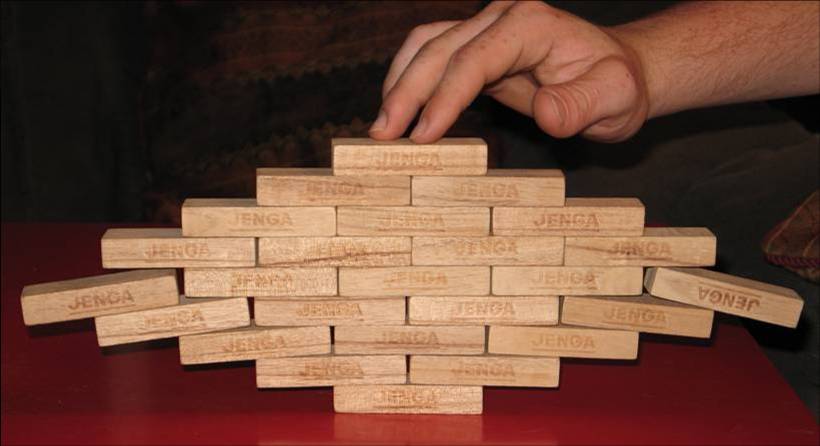}
\end{center} \caption{The instability of a $5$-diamond in theory
and practice.}\label{fig:collapse}
\end{figure}
%UZ: Added schematic diagrams in addition to the photos. I believe they
% are important for two reasons. The first is that they give a more precise
% explanation as to what is going on. The second is that they stress the
% fact that the collapse is not caused by inaccuracies in the implementation
% of diamonds.

%MTnew: wrote a bit more putting more spot light on our side of the
%contribution, and sorry Mike, saying that PZ had considered
%other side so that it doesn't look as if you just hadn't gotten
%to this other side.
It is not hard to show that particular constructions like larger inverted
triangles or diamonds are unstable. This instability of inverted
triangles and diamonds was
already noted in  \cite{PZ06}.
However, this does not rule out the possibility of a smarter
balanced way of stacking $n$ blocks so as to achieve an overhang
of order $n^{1/2}$, and that would be much better than the above
mentioned overhang of order $n^{1/3}$ achieved by Paterson and Zwick \cite{PZ06}. Paterson and Zwick did consider this general question.
They did not rule out an overhang
of order $n^{1/2}$, but they proved that no larger overhang
would be possible. Thus their work shows that the order of the maximum
overhang with $n$ blocks has to be somewhere between $n^{1/3}$ and $n^{1/2}$.

\subsection{Our result}
We show here that an overhang of order $n^{1/3}$, as obtained
by~\cite{PZ06}, is in fact best possible. More specifically, we show
that any $n$-block stack with an overhang of at least $6n^{1/3}$ is
unbalanced, and must hence collapse. Thus we conclude that
the maximum overhang with $n$ blocks is of order $n^{1/3}$.

\subsection{Contents}

The rest of this paper is organized as follows. In the next section
we present a precise mathematical definition of the overhang
problem, explaining in particular when a stack of blocks is said to
be \emph{balanced} (and when it is said to be \emph{stable}). In
Section~\ref{sec:wall} we briefly review the Paterson-Zwick
construction of stacks that achieve an overhang of order~$n^{1/3}$.
In Section~\ref{sec:mass} we introduce a class of abstract mass
movement problems and explain the connection between these problems
and the overhang problem. In Section~\ref{sec:bounds} we obtain
bounds for mass movement problems that imply the order $n^{1/3}$
upper bound on overhang. We end in Section~\ref{sec:concl} with some
concluding remarks and open problems.

% In what follows we first review the Paterson-Zwick construction of
% an $n^{1/3}$-stacking (due to the first and last author and
% abstracted in \cite{PZ06}), and then we prove that this construction
% is optimal in the sense that no overhang of larger order is
% possible. Finally we speculate about the configurations which
% achieve overhang $cn^{1/3}$ for the best possible constant factor
% c$.

\section{The Model} \label{sec:model}

We briefly state the mathematical definition of the overhang
problem. For more details, see \cite{PZ06}.  As in previous papers,
e.g., \cite{H05}, the overhang problem is taken here to be a two-dimensional
problem:  each block is represented by a frictionless rectangle whose long sides
are parallel to the table. Our upper bounds apply, however, in much more
general settings, as will be discussed in Section~\ref{sec:concl}.

\subsection{Stacks}

Stacks are composed of blocks that are assumed to be identical,
homogeneous, frictionless rectangles of unit length, unit weight
and height~$h$. Our results here are clearly independent of $h$,
and our figures use any convenient height.  Previous authors have
thought of blocks as cubes, books, coins, playing cards, etc.

A stack $\{B_1,\dots,B_n\}$ of $n$ blocks resting on a flat table is
specified by giving the coordinates $(x_i,y_i)$ of the lower left
corner of each block $B_i$.  We assume that the upper right corner
of the table is at $(0,0)$ and that the table extends arbitrarily
far to the left.  Thus block $B_i$ is identified with the box
$[x_i,x_i+1]\times [y_i,y_i+h]$ (its length aligned with the
$x$-axis), and the table, which we conveniently denote by $B_0$,
with the region $(-\infty,0]\times (-\infty,0]$. Two blocks are
allowed to touch each other, but their interiors must be disjoint.

We say that block $B_i$ \emph{rests} on block $B_j$, denoted
``$B_i/B_j$'', if and only if $B_i\cap B_j\ne \emptyset$ and
$y_i=y_j+h$. If $B_i\cap B_0\ne \emptyset$, then $B_i/B_0$, i.e.,
block $B_i$ rests on the table.  If $B_i/B_j$, we let
$I_{ij}=B_i\cap B_j = [a_{ij},b_{ij}]\times \{y_i\}$ be their
\emph{contact interval}. If $j\ge 1$, then $a_{ij}=\max\{x_i,x_j\}$
and $b_{ij}=\min\{x_i\!+\!1,x_j\!+\!1\}$. If $j=0$ then $a_{i0}=x_i$
and $b_{i0}=\min\{x_i\!+\!1,0\}$.

The \emph{overhang} of a stack is defined to be $\max_{i=1}^n (x_i\!+\!1)$.

%PW: Even though the value of this expression is unaffected, it pains
%me to allow ambiguous parsing.  Hence the parens.

\subsection{Forces, equilibrium and balance} % and stability}
\label{SS-force}

% A stack is {\em stable} if and only if there is an appropriate assignment of forces
% under which all the blocks are in equilibrium. A block is in equilibrium
% if the net force and the net moment applied to it are both zero.

Let $\{B_1,\dots,B_n\}$ be a stack composed of~$n$ blocks. If~$B_i$
rests on~$B_j$, then~$B_j$ may apply an upward force of $f_{ij}\ge
0$ on~$B_i$, in which case~$B_i$ will reciprocate by applying a
downward force of the same magnitude on~$B_j$.  Since the blocks and
table are frictionless, all the forces acting on them are vertical.
The force~$f_{ij}$ may be assumed to be applied at a single point
$(x_{ij},y_{ij})$ in the contact interval $I_{ij}$. A downward
gravitational force of unit magnitude is applied on~$B_i$ at its
center of gravity $(x_i+\frac12,y_i+\frac{h}{2})$.

% A block is said to be in \emph{equilibrium} if the net force and
% the net moment acting upon it are both zero. More formally:

% We are now ready to describe formally when a block is stable.
% Let $B$ be a block with its left edge
% at horizontal position $x$.  If upward forces of magnitude
% $f_1,f_2,\ldots,f_k\ge 0$ are applied to $B$ (necessarily along
% its bottom) at positions $x_1,x_2,\ldots,x_k$, and downward
% forces of magnitude $g_1,g_2,\ldots,g_\ell\ge 0$ (along
% its top) at positions $x'_1,x'_2,\ldots,x'_\ell$, then $B$ is in
% equilibrium if and only if
% $$\sum_{i=1}^k f_i = 1 + \sum_{i=1}^{\ell} g_i~,\quad
% \sum_{i=1}^k x_i f_i = a + \frac12 + \sum_{i=1}^{\ell} x'_i
% g_i\;.$$ The former equation says that the net force on the block
% is zero and the latter says that the net moment is zero.

\begin{definition}[Equilibrium]\label{D:equi} Let $B$ be a homogeneous
block of unit length and unit weight, and let $a$ be the $x$-coordinate of its
left edge. Let $(x_1,f_1),(x_2,f_2),\ldots,(x_k,f_k)$ be the
positions and the magnitudes of the upward forces applied to~$B$
along its bottom edge, and let $(x'_1,f'_1), (x'_2,f'_2), \ldots,
(x'_{k'},f'_{k'})$ be the positions and magnitudes of the upward
forces applied by~$B$, along its top edge, on other blocks of the
stack. Then, $B$ is said to be in \emph{equilibrium} under these
collections of forces if and only if
$$\sum_{i=1}^k f_i \;=\; 1 + \sum_{i=1}^{k'} f'_i \ ,\quad
\sum_{i=1}^k x_i f_i \;=\; (a + \frac12) + \sum_{i=1}^{k'} x'_i
f'_i\;.$$ The first equation says that the \emph{net force} applied
to~$B$ is zero while the second says that the \emph{net moment} is
zero.
\end{definition}

\begin{definition}[Balance]\label{D:bal} A stack $\{B_1,\dots,B_n\}$ is said to be \emph{balanced}
if there exists a collection of forces acting between the blocks
along their contact intervals, such that under this collection of
forces, and the gravitational forces acting on them, all blocks are
in equilibrium.
% and consistent with gravity, under which each block is in equilibrium.
%UZ: Reworded the reference to gravity.
\end{definition}

%PW: I threw in gravity and made the syntax less plural, to try to improve clarity,
%and killed the first reference to static indeterminacy.

The stacks presented in Figures~\ref{fig:opt34} and~\ref{fig:PZ} are
balanced. They are, however, \emph{precariously} balanced, with some
minute displacement of their blocks leading to imbalance and
collapse. A stack can be said to be \emph{stable} if all stacks
obtained by sufficiently small displacements of its blocks are
balanced. We do not make this definition formal as it is not used in
the rest of the paper, though we refer to it in some informal
discussions.

% \begin{definition}[Stability] A stack $\{B_1,\dots,B_n\}$ is said to be
% \emph{stable} if all stacks obtained by sufficiently small
% displacement of its block are balanced.
% \end{definition}

\begin{figure}[t]
\begin{center}
\includegraphics[width=140mm]{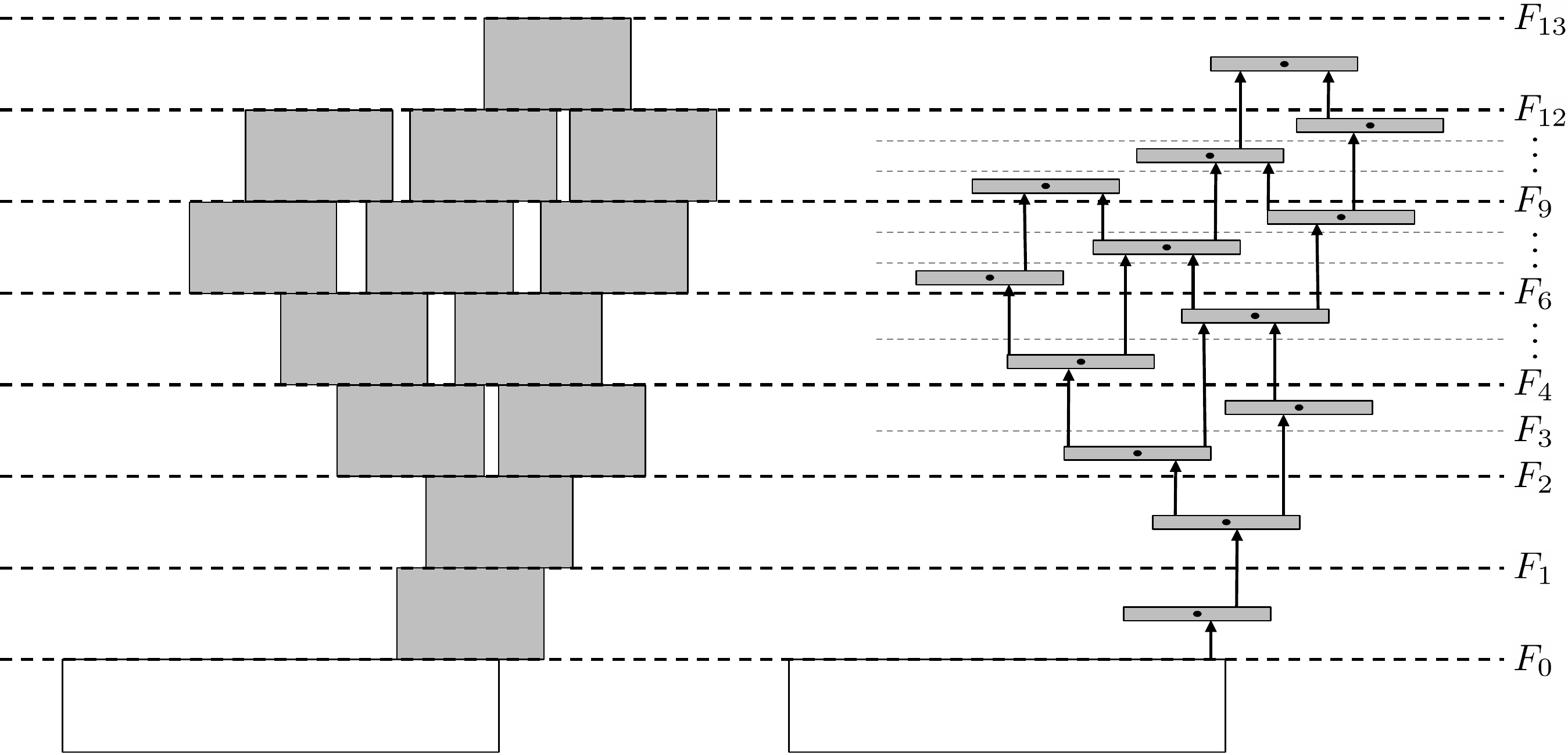}
\caption{Balancing collections of forces within a stack.}
\label{fig:FG}
\end{center}\vspace{-5mm}
\end{figure}

%PW: I changed FG-old.eps to FG.eps.

A schematic description of a stable stack and a collection of
balancing forces acting between its blocks is given in
Figure~\ref{fig:FG}. Only upward forces are shown in the figure but
corresponding downward forces are, of course, present. (We note in
passing that balancing forces, when they exist, are in general not
uniquely determined. This phenomenon is referred to as \emph{static
indeterminacy}.)

We usually adopt the convention that the blocks of a balanced stack
are numbered consecutively from bottom to top and from left to
right. Block $B_1$ is then the leftmost block in the lowest level
while $B_n$ is the rightmost block at the top level. For every $0\le
i\le n$, we let $F_i$ be a collection of upward balancing
forces applied by blocks in $\{B_0,B_1,\ldots,B_{i}\}$ on
blocks in $\{B_{i+1},\ldots,B_n\}$. (See Figure~\ref{fig:FG}.) We
refer to~$F_i$ as the collection of forces that cross the $i$-th
\emph{slice} of the stack.

Let us examine the relationship between two consecutive
collections~$F_i$ and~$F_{i+1}$. The only forces present in~$F_i$
but not in~$F_{i+1}$ are upward forces applied to~$B_i$, while the
only forces present in~$F_{i+1}$ but not in~$F_i$ are upward
%PW: As Mikkel noted, since $F_i$ is defined to be a collection
%of upward forces, we can't use "downward" here.
forces applied by~$B_i$ to blocks resting upon it.  If we let
$(x_1,f_1),(x_2,f_2),\ldots,(x_k,f_k)$ be the positions and the
magnitudes of the upward forces applied to~$B_i$, and $(x'_1,f'_1),
(x'_2,f'_2), \ldots, (x'_{k'},f'_{k'})$ be the positions and
magnitudes of the upward forces applied by~$B_i$, and if we let~$a$
%UZ: Changed 'downward forces applied to B_i' to 'upward forces applied by B_i'
be the $x$-coordinate of the left edge of~$B_i$, we get by
Definitions~\ref{D:equi} and~\ref{D:bal}, that $\sum_{i=1}^k f_i = 1
+ \sum_{i=1}^{k'} f'_i$ and $\sum_{i=1}^k x_i f_i = (a + \frac12) +
\sum_{i=1}^{k'} x'_i f'_i$. Block~$B_i$ thus \emph{rearranges} the
forces in the interval $[a,a+1]$ in a way that preserves the total
magnitude of the forces and their total moment, when its own weight
is taken into account. Note that all forces of~$F_0$ act in
non-positive positions, and that if~$B_k$ is the most overhanging
block in a stack and the overhang achieved by it is~$d$, then the
total magnitude of the forces in~$F_{k-1}$ that act at or beyond
position~$d{-}1$ should be at least~$1$. These simple observations
play a central role in the rest of the paper.

\subsection{The overhang problem}

The natural formulation of the overhang problem is now:

\begin{quote}What is the maximum overhang achieved by a \emph{balanced}
$n$-block stack?
\end{quote}

% It should be noted that the stacks in the definition above are
% required to be balanced rather than stable. By the nature of the
% overhang problem stacks that achieve a maximum overhang are on the
% tip of collapse and thus unstable. In most cases, however, overhangs
% arbitrarily close to the maximum overhang may be obtained using
% stable stacks. (Probably the only counterexample is the case $n=3$.)
% It is impossible to approach an overhang of 1 using a stable
% 3-block stack.)

The main result of this paper is:

\begin{theorem}\label{T-main} The overhang achieved by a balanced $n$-block stack
is at most $6n^{1/3}$.
\end{theorem}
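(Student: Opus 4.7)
The plan is to translate the overhang problem into an abstract mass-movement problem and then prove a sharp bound there. Following the remarks at the end of Section~\ref{SS-force}, view each $F_i$ as a nonnegative measure on the real line. The initial measure $F_0$ is supported in $(-\infty,0]$, and if some block $B_k$ achieves overhang $d$, then $F_{k-1}$ must place mass at least $1$ in $[d-1,\infty)$. Each transition $F_{i-1}\to F_i$ is a \emph{local rearrangement} confined to the unit interval $[a_i,a_i+1]$ occupied by $B_i$: outside this interval the measures agree, while inside, the equilibrium of $B_i$ forces the total mass to drop by exactly $1$ and the first moment to drop by exactly $a_i+\tfrac12$, with the remaining mass otherwise freely redistributed. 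The abstract question to answer is: starting from a measure supported in $(-\infty,0]$, how far to the right can $n$ such rearrangements push a unit of mass?

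To control the rightward spread, I would monitor potential functions of the form $\Phi(F)=\int\phi(x)\,dF$ for well-chosen smooth convex $\phi$. For such $\phi$, a single local rearrangement changes $\Phi$ in only two ways: the removal of a unit point mass at the block's center $a_i+\tfrac12$ can only decrease $\Phi$, while the mass- and first-moment-preserving redistribution within $[a_i,a_i+1]$ can increase $\Phi$ by at most a Jensen-type quantity of order $\sup_{[a_i,a_i+1]}|\phi''|$ times the local mass in that interval. Telescoping the per-step bound over all $n$ steps, and choosing $\phi$ so that $\Phi(F_0)\le 0$, one obtains an upper bound on $\Phi(F_{k-1})$; the requirement $\Phi(F_{k-1})\ge\phi(d-1)$ (from the unit mass beyond $d-1$) then forces an inequality relating $d$ and $n$, which after optimization over the family of test functions should give $d\le 6n^{1/3}$.

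The main obstacle is sharpness: a direct bound on a single polynomial moment such as $\int x_+^2\,dF$ only yields $d=O(n^{1/2})$, which was already known to Paterson and Zwick. The extra factor of $n^{1/6}$ must come from exploiting the self-similar, multi-scale nature of the force distribution --- morally, that a unit of mass sitting at distance $d$ has to be ``supported'' by a strictly larger amount of mass at smaller distances, and that this recursive obligation cannot be fulfilled cheaply. Capturing this, either through a hierarchy of cutoffs, a carefully tuned non-polynomial $\phi$, or an inductive argument over a geometric sequence of distance scales, is what should deliver the cubic gain. Identifying the right invariant, so that per-step growth is tightly controlled everywhere yet the target overhang still forces a large final value, is the technical heart of the proof.
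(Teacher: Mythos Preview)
Your translation to a mass-movement problem is correct and matches the paper's Section~\ref{sec:mass}, and your instinct that a multi-scale/inductive argument over a geometric sequence of thresholds is what closes the gap from $n^{1/2}$ to $n^{1/3}$ is exactly right --- that is precisely what the paper does in Theorem~\ref{T-gen}, inducting on the largest $u$ with $\mu_{\max}\{x\ge r+u\}>n/5$. So the outer architecture of your plan is sound.

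The genuine gap is the ``technical heart'' you explicitly leave open, and here the paper's solution is \emph{not} a linear potential $\int\phi\,dF$. The key functional is the quadratic \emph{spread}
\[
S[\mu]\;=\;\sum_{i<j}|x_i-x_j|\,m_i m_j,
\]
used in tandem with the second moment $M_2[\mu]$. Two inequalities do all the work: Lemma~\ref{L-SM2}, $S[\mu]^2\le\tfrac13 M_2[\mu]M_0[\mu]^3$, and Lemma~\ref{L-extreme}, which says that for a single extreme move $S[\mu_1]-S[\mu_0]\ge 3(M_2[\mu_1]-M_2[\mu_0])^2$. Summing the latter over $\ell$ moves and applying Cauchy--Schwarz gives $S\ge 3M_2^2/\ell$; combining with the former yields $\ell\ge(3p)^{3/2}d^3$ for the symmetric problem (Lemma~\ref{L-initial}). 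A mirroring trick (Lemma~\ref{L-max-g}) reduces the one-sided problem to the symmetric one, and this cubic ``base case'' is then fed into the scale induction you anticipated. The point is that the per-step gain in spread is \emph{quadratic} in the per-step gain in second moment, which is exactly the nonlinearity your linear-$\phi$ framework cannot see; a convex test function only gives a per-step increment linear in the local mass, and telescoping that recovers at best $n^{1/2}$ without the induction, or requires you to reinvent the spread argument inside each inductive step.
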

%UZ: Changed 8n^{1/3} to 6n^{1/3}

The fact that the stacks in the theorem above are required to be
balanced, but not necessarily stable, makes our result only
stronger. By the nature of the overhang problem, stacks that achieve
a maximum overhang are on the verge of collapse and thus unstable.
In most cases, however, overhangs arbitrarily close to the maximum
overhang may be obtained using stable stacks. (Probably the only
counterexample is the case $n=3$.)
% It is impossible to approach an
% overhang of 1 using a stable 3-block stack.)

\section{The Paterson-Zwick construction} \label{sec:wall}

Paterson and Zwick \cite{PZ06} describe a family of balanced
$n$-block stacks that achieve an overhang of about
$(3n/16)^{1/3}\simeq 0.57n^{1/3}$. More precisely, they construct
for every integer $d\ge 1$ a balanced stack containing
$\frac{d(d-1)(2d-1)}{3}+1\simeq 2d^3/2$ blocks that achieves an
overhang of~$d/2$. Their construction, for $d=6$, is illustrated in
Figure~\ref{fig:6stack}. The construction is an example of what
\cite{PZ06} terms a \emph{brick-wall} stack, which resembles the
simple ``stretcher-bond'' pattern in real-life bricklaying. In each
row the blocks are contiguous, with each block centered over the
ends of blocks in the row beneath.
Overall the stack is symmetric and has a roughly parabolic shape,
with vertical axis at the table edge.

\begin{figure}[t]
\begin{center}
\includegraphics[width=100mm]{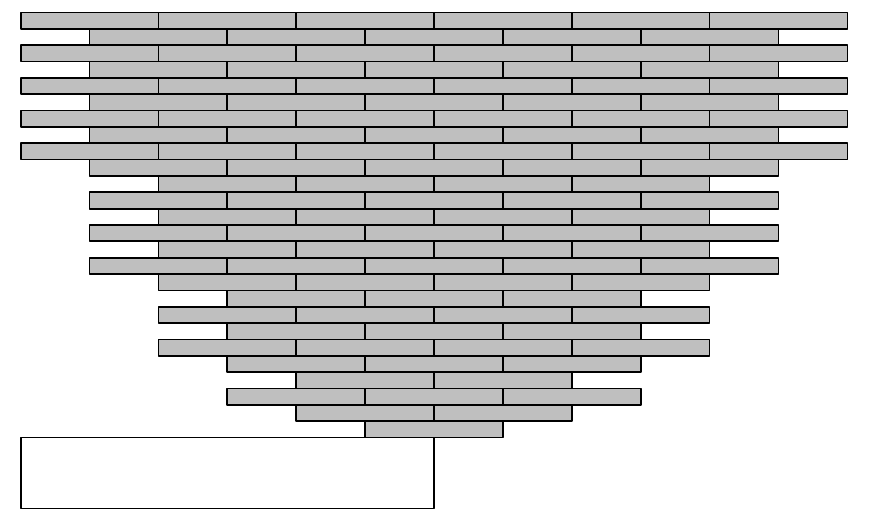}
\caption{A ``$6$-stack'' consisting of 111 blocks and giving an overhang
of $3$, taken from~\cite{PZ06}.} \label{fig:6stack}
\end{center}\vspace{-5mm}
\end{figure}

The stacks of \cite{PZ06} are constructed in the following simple
manner. A \emph{$t$-row} is a row of $t$ adjacent blocks,
symmetrically placed with respect to $x=0$. An \emph{$r$-slab} has
height $2r{-}3$ and consists of alternating $r$-rows and
$(r{-}1)$-rows, the bottom and top rows being $r$-rows. An $r$-slab
therefore contains $r(r{-}1)+(r{-}1)(r{-}2)=2(r{-}1)^2$ blocks. A
\emph{$1$-stack} is a single block balanced at the edge of the
table; a \emph{$d$-stack} is defined recursively as the result of
adding a $d$-slab symmetrically onto the top of a $(d{-}1)$-stack.
The construction itself is just a $d$-stack and so has overhang
$d/2$; its total number of blocks is given by $n = 1+ \sum_{r=1}^d
2(r{-}1)^2 = \frac{d(d-1)(2d-1)}{3}+1$. It is shown in~\cite{PZ06},
using an inductive argument, that $d$-stacks, for any $d\ge 1$, are
balanced.

% \begin{theorem} \label{thm:lower}
% $\D(n)\geqslant (3n/16)^{1/3} - O(1)$ for all $n$.
% \end{theorem}

Why should a parabolic shape be appropriate? Some support for this
comes from considering the effect of a block in spreading a single
force of~$f$ acting from below into two forces of almost~$f/2$ exerted
upwards from its edges. This spreading behavior is analogous to a
symmetric random walk on a line or to difference equations for
the ``heat-diffusion'' process in a linear strip.
In both cases we see that time of about~$d^2$ is
needed for effective spreading to width~$d$, corresponding to a
parabolic stack profile.

Our main result, Theorem~\ref{T-main}, states that the parabolic
stacks of \cite{PZ06} are optimal, up to constant factors. Better
constant factors can probably be obtained, however. Paterson and
Zwick \cite{PZ06} present some numerical evidence that suggests that
the overhang that can be achieved using~$n$ blocks, for large values
of~$n$, is at least $1.02n^{1/3}$. For more on this, see
Section~\ref{sec:concl}.

% Interestingly, these larger overhangs are obtained using stacks that
% are shaped like the ``oil-lamp'' depicted in
% Figure~\ref{fig:oil-lamp}. For more details on the figure and on
% ``oil-lamp'' constructions, see \cite{PZ06}.

\section{Mass movement problems} \label{sec:mass}

Our upper bound on the maximum achievable overhang is obtained by
considering \emph{mass movement} problems that are an abstraction
of the way in which balancing forces ``flow'' though a stack of
blocks. (See the discussion at the end of Section~\ref{SS-force}.)

In a mass movement problem we are required to transform an initial
\emph{mass distribution} into a mass distribution that satisfies
certain conditions. The key condition is
that a specified amount of mass be moved to or beyond a certain
position. We can transform one mass distribution into another by
performing local \emph{moves} that redistribute mass within a given
interval in a way that preserves the total mass and the center of
mass. Our goal is then to show that many moves are required to
accomplish the task. As can be seen, masses here correspond to
forces, mass distributions correspond to collections of forces, and
moves mimic the effects of blocks.

The mass movement problems considered are formally defined in
Sections~\ref{SS-dist} and~\ref{SS-moves}. The correspondence
between the mass movement problems considered and the overhang
problem is established in Section~\ref{SS-corres}. The bounds on
mass movement problems that imply Theorem~\ref{T-main} are then
proved in Section~\ref{sec:bounds}.

% The rest of the section is then devoted to the proof that many moves
% are required to solve the mass movement problems of interest, or
% alternatively, many blocks are required to achieve a given overhang.

% may, for example, convert a single upward force acting on its
% bottom edge to two or mora pair of forces exerted on the layer
% above, or maybe combine a pair of forces acting from below to a
% single force exerted above. Furthermore the weight of each block
% will remove one unit of the forces acting from below it.

\subsection{Distributions}\label{SS-dist}

\begin{definition}[Distributions and signed distributions]
A discrete mass \emph{distribution} is a set
$\mu=\{(x_1,m_1),\allowbreak (x_2,m_2),\ldots,(x_k,m_k)\}$, where~$k>0$,
$x_1,x_2,\ldots,x_k$ are real numbers, and
$m_1, \ldots, m_k> 0$. A \emph{signed distribution}~$\mu$ is
defined the same way, but without the requirement that
$m_1,m_2,\ldots,m_k> 0$.

If $\mu=\{(x_1,m_1),(x_2,m_2),\ldots,(x_k,m_k)\}$ is a (signed)
distribution, then for any set $A\subseteq \reals$, we define
$$\mu(A)=\sum_{x_i\in A} m_i\;.$$ For brevity, we use $\mu(a)$ as a
shorthand for $\mu(\{a\})$ and $\mu\{x>a\}$ as a shorthand for
$\mu(\{x\mid x>a\})$. (Note that $x$ here is a formal variable that
does not represent a specific real number.) We similarly use
$\mu\{x\ge a\}$, $\mu\{x<a\}$, $\mu\{a<x<b\}$, $\mu\{|x|\ge a\}$,
etc., with the expected meaning.

We say that a (signed) distribution is \emph{on} the interval
$[a,b]$ if $\mu(x)=0$, for every $x\not\in [a,b]$.

For every $A\subseteq \reals$, we let $\mu_A$ be the
\emph{restriction} of~$\mu$ to~$A$:
$$\mu_A \;=\; \{ (x_i,m_i) \mid x_i\in A\}\;.$$

If $\mu_1$ and $\mu_2$ are two signed distributions, we let
$\mu_1+\mu_2$ and $\mu_1-\mu_2$  be the signed distributions for
which
$$\begin{array}{c}
(\mu_1+\mu_2)(x)\;=\;\mu_1(x)+\mu_2(x),\quad \mbox{\rm for
every $x\in \reals$}\;,\\
(\mu_1-\mu_2)(x)\;=\;\mu_1(x)-\mu_2(x),\quad \mbox{\rm for
every $x\in \reals$}\;. \end{array}$$
\end{definition}

\begin{definition}[Moments]
Let $\mu=\{(x_1,m_1),(x_2,m_2),\ldots,(x_k,m_k)\}$ be a
signed distribution and let $j\ge 0$ be an integer. The $j$-th
moment of $\mu$ is defined to be:
$$\M_j[\mu] \eq \sum_{i=1}^k m_i x_i^j\;.$$
Note that~$\M_0[\mu]$ is the \emph{total mass} of~$\mu$, $\M_1[\mu]$
is the \emph{torque} of~$\mu$, with respect to the origin,
and~$\M_2[\mu]$ is the \emph{moment of inertia} of~$\mu$, again with
respect to the origin. If $\M_0[\mu] \neq 0$, we let $C[\mu] =
\M_1[\mu]/\M_0[\mu]$ be the \emph{center of mass} of~$\mu$.
\end{definition}

Less standard, but crucial for our analysis, is the following
definition.
\begin{definition}[Spread]\label{D-spread}
The \emph{spread} of a distribution
$\mu=\{(x_1,m_1),(x_2,m_2),\ldots,(x_k,m_k)\}$ is defined as
follows:
$$S[\mu]=\sum_{i<j} |x_i-x_j|\,m_i\,m_j\;.$$
\end{definition}

If $\M_0[\mu]=1$, then~$\mu$ defines a discrete random variable~$X$
for which $\Pr[X=x] = \mu(x)$, for every $x\in \reals$.  The spread
$S[\mu]$ is then half the average distance between two
independent drawings from $\mu$. We also then have $\M_1[\mu]=E[X]$
and $\M_2[\mu]=E[X^2]$. If $\M_1[\mu]=E[X]=0$, then
$\M_2[\mu]=E[X^2]=\mbox{\it Var}[X]$. It is also worthwhile noting
that if $\mu_1$ and $\mu_2$ are two distributions then, for any
$k\ge 0$, $\M_k[\mu_1+\mu_2]=\M_k[\mu_1]+\M_k[\mu_2]$, i.e., $\M_k$
is a linear operator.

An inequality that proves very useful in the sequel is the
following:

\begin{lemma}\label{L-SM2} For any discrete distribution $\mu$ we have
$S[\mu]^2\leq \frac{1}{3}\M_2[\mu]\M_0[\mu]^3$.
\end{lemma}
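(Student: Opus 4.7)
The plan is to reduce the inequality to a probabilistic statement --- a bound on Gini's mean difference in terms of variance --- and then apply Cauchy--Schwarz via the quantile function.

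First I would normalize $\mu$ using two invariances. Translating $x_i \mapsto x_i - c$ leaves $S[\mu]$ unchanged, since it depends only on the differences $|x_i - x_j|$, while $\M_2[\mu]$ is minimized at $c = \M_1[\mu]/\M_0[\mu]$, where it becomes $\M_2[\mu] - \M_1[\mu]^2/\M_0[\mu] \leq \M_2[\mu]$. Scaling all masses by $1/\M_0[\mu]$ divides both $S[\mu]^2$ and $\M_2[\mu]\,\M_0[\mu]^3$ by the common factor $\M_0[\mu]^4$. Hence it suffices to prove $S[\mu]^2 \leq \tfrac13\,\M_2[\mu]$ when $\mu$ is a probability distribution with mean zero.

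In that case, let $X$ and $Y$ be independent random variables with distribution $\mu$; then $2\,S[\mu] = E|X-Y|$ and $\M_2[\mu] = E[X^2]$. Let $Q:[0,1]\to\reals$ be the quantile function of $\mu$, so that $X$ is distributed as $Q(U)$ for $U$ uniform on $[0,1]$, and $\int_0^1 Q(u)^2\,du = E[X^2]$. Since $Q$ is non-decreasing, $|Q(u)-Q(v)| = Q(\max(u,v)) - Q(\min(u,v))$; using that $\max(U,V)$ and $\min(U,V)$ have densities $2w$ and $2(1-w)$ on $[0,1]$ yields the identity
$$E|X-Y| \eq 2\int_0^1 (2w-1)\,Q(w)\,dw.$$

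Cauchy--Schwarz then gives $\bigl(\int_0^1 (2w-1)\,Q(w)\,dw\bigr)^2 \leq \int_0^1 (2w-1)^2\,dw \cdot \int_0^1 Q(w)^2\,dw = \tfrac13\,E[X^2]$, so $(E|X-Y|)^2 \leq \tfrac43\,E[X^2]$, and hence $S[\mu]^2 \leq \tfrac13\,\M_2[\mu]$. Undoing the normalization gives the claim. The one step that needs genuine care is the quantile integral identity for $E|X-Y|$; the remaining ingredients are routine, and equality in the uniform-distribution case confirms that the constant $\tfrac13$ is sharp.
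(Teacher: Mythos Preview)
Your proof is correct and follows essentially the same route as the paper's: normalize to a probability distribution, pass to the quantile function, express $S[\mu]$ as an integral of the quantile against a linear weight, and apply Cauchy--Schwarz against that weight to pick up the factor $\tfrac13$. The only cosmetic differences are that the paper parametrizes the quantile on $[-\tfrac12,\tfrac12]$ (so the weight is simply $t$ and $\int t^2\,dt=\tfrac1{12}$) and derives the spread identity by a direct double-integral computation rather than via the densities of $\max(U,V)$ and $\min(U,V)$.
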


The proof of Lemma~\ref{L-SM2} is given in Section~\ref{SS-proofs}.

\subsection{Mass redistribution moves}\label{SS-moves}

% The upward forces acting between two adjacent layers of a stack
% define a distribution. Motivated by the effect of a single brick
% on two such adjacent distributions, we define elementary
% redistribution steps as follows:

\begin{definition}[Moves]\label{D-move}
A \emph{move} $v=([a,b],\delta)$ consists of an interval $[a,b]$
and a signed distribution $\delta$ on~$[a,b]$ with
$\M_0[\delta]=\M_1[\delta]=0$. A move~$v$ can be \emph{applied} to
a distribution $\mu$ if the signed distribution $\mu'=\mu+\delta$
is a distribution, in which case we denote the result~$\mu'$ of
this application by $v\mu$. We refer to $\frac{a+b}{2}$ as the
\emph{center} of the move. Unless otherwise stated, the moves we
consider operate on intervals of length~$1$, i.e., $b-a=1$.
\end{definition}

Note that $v$ is a move and $\mu'=v\mu$, then
$M_0[\mu']=M_0[\mu]$, $M_1[\mu']=M_1[\mu]$ and consequently
$C[\mu']=C[\mu]$.

A sequence $V=\langle
v_1,v_2,\ldots,v_\ell\rangle$ of moves and an initial distribution
$\mu_0$ naturally define a sequence of distributions
$\mu_0,\mu_1,\ldots,\mu_\ell$, where $\mu_i=v_i\mu_{i-1}$ for
$1\le i\le \ell$. (It is assumed here that $v_i$ can indeed be
applied to $\mu_{i-1}$.) We let $V\mu_0=\mu_\ell$.

Moves and sequences of moves simulate the behavior of
\emph{weightless} blocks and stacks. However, the blocks that we are
interested in have unit weight.
Instead of explicitly taking into account the weight of the blocks,
as we briefly do in Section~\ref{SS-corres},
% a step that would make
% the analysis slightly messier, as the total mass and the center of
% mass of the distributions would no longer be preserved,
it turns out that it is enough for our purposes to impose a natural
restriction on the move sequences considered.
We start with the following definition:

% To capture the behavior of unit weight blocks, we introduce the
% concept of \emph{weight-constrained} sequences of distributions. We
% start with the following notation:

\begin{definition}[$\mu_{\max}$] If $\mu_0,\mu_1,\ldots,\mu_\ell$ is a sequence of
distributions, and $a\in \reals$, we define
$$
% \mu_{\max}\{x\ge a\} \;=\; \max_{0\le i\le \ell} \mu_i\{x\ge a\}
% \quad,\quad
% $$
% $$
\mu_{\max}\{x> a\} \;=\; \max_{0\le i\le \ell} \mu_i\{x> a\}\;.
$$
Expressions like $\mm{x\ge a}$, $\mm{x<a}$ and $\mm{x\le a}$ are
defined similarly.
\end{definition}

\begin{definition}[Weight-constrained sequences]
A sequence $V=\langle v_1,v_2,\ldots,v_\ell\rangle$ of moves that
generates a sequence $\mu_0,\mu_1,\ldots,\mu_\ell$ of distributions
is said to be \emph{weight-constrained}, with respect to~$\mu_0$ if,
for every $a\in \reals$, the number of moves in~$V$ centered
in~$(a,\infty)$ is at most $\mm{x>a}$.
\end{definition}

The two main technical results of this paper are the following theorems.

\begin{theorem}\label{T-m1}
If a distribution~$\nu$ is obtained from a distribution~$\mu$ with
$\mu\{x\le 0\}\le n$ and $\mu\{x>0\}=0$, where $n\geq 1$, by a
weight-constrained move sequence, then $\nu\{x\ge 6 n^{1/3}-1\}=0$.
%UZ: changed 8n^{1/3} to 6n^{1/3}-1.
\end{theorem}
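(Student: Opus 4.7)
My plan is to use a potential argument based on the second moment, combined with Lemma~\ref{L-SM2} to convert moment bounds into a bound on the reach. Throughout I assume for contradiction that $\nu\{x\ge D\}>0$ for $D=6n^{1/3}-1$, and let $p:=\nu\{x\ge D\}>0$.

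\emph{Per-move estimate.} For a move $v_j=([c_j-\frac12,c_j+\frac12],\delta_j)$ with ``weight'' $w_j:=M_0[\delta_j^+]$, the conditions $M_0[\delta_j]=M_1[\delta_j]=0$ together with the length-one support of $\delta_j$ give $|M_2[\delta_j]|\le \frac14 w_j$, with the extremal case being positive mass at the two endpoints balanced by negative mass at the midpoint. Telescoping along the sequence $\mu_0,\mu_1,\ldots,\mu_\ell=\nu$ yields $M_2[\nu]-M_2[\mu_0]\le \frac14\sum_j w_j$. Since $M_2[\mu_0]$ can a priori be large (as $\mu_0$ may extend far to the left), the natural remedy is to work instead with $\Phi(\mu):=M_2[\mu_{[0,\infty)}]$, which vanishes on $\mu_0$ and satisfies an analogous per-move bound $|\Phi(\mu_i)-\Phi(\mu_{i-1})|\le \frac14 w_i$.

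\emph{Weight constraint.} The constraint yields two useful facts: $\#\{j:c_j>a\}\le \mu_{\max}\{x>a\}$, and the mass available in the interval of $v_j$ at time $j{-}1$ (which upper bounds $w_j$) is at most $\mu_{\max}\{x>c_j-\frac12\}$. Partitioning moves by their center and using Abel summation should yield a recursive control on $\mu_{\max}\{x>a\}$ that bounds the relevant part of $\sum_j w_j$, and thus $\Phi(\nu)$, by something of order $n^{?}$.

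\emph{Spread bound and conclusion.} Since each move preserves $M_0$ and $M_1$, the center of mass of $\nu$ equals $C[\mu_0]\le 0$; combined with mass $p$ at positions $\ge D$, the cross-pair contribution gives $S[\nu]\ge p(n-p)(c_1-c_2)\ge p\,D\,n$, where $c_1\ge D$ and $c_2\le -pD/(n-p)$ are the conditional centers of the right and left portions. Lemma~\ref{L-SM2} then gives $M_2[\nu]\ge 3\,S[\nu]^2/n^3\ge 3p^2D^2/n$. Matching this against the upper bound from the first two steps produces a cubic inequality in $D$, and a careful accounting of constants gives $D<6n^{1/3}-1$, contradicting the assumption.

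\emph{Main obstacle.} The crux is the second step. A naive $\sum_j w_j\le \ell\cdot n$ is useless since the total number of moves $\ell$ is unbounded. The cubic (rather than quadratic or logarithmic) dependence of $D$ on $n$ arises only by using \emph{both} facets of the weight constraint simultaneously: that few moves can have high center, and that those moves must have small weight. A further subtlety in the third step is that $p$ can be arbitrarily small, so obtaining a $p$-free contradiction may require routing the argument through an intermediate distribution $\mu_i$ whose spread has already developed, or through the companion technical result alluded to immediately after Theorem~\ref{T-m1}. Getting the precise constant $6$ will require balancing all of these effects carefully.
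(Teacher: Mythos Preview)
Your proposal has a genuine gap at the level of the basic mechanism, not just in the details you flag as ``obstacles''.

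Your potential is the second moment, with the per-move bound $|M_2[\delta_j]|\le \tfrac14 w_j$. Even in the cleanest setting---starting from $\{(0,1)\}$ so that every $w_j\le 1$---this yields only $M_2[\nu]\le \ell/4$, and together with $M_2[\nu]\ge p\NN^2$ you get $\ell\ge 4p\NN^2$: a \emph{quadratic} distance--vs--moves relationship. No amount of bookkeeping with the weight constraint or Abel summation can upgrade this to the cubic $\ell\gtrsim p^{3/2}\NN^3$ that the theorem requires; the linear per-move bound on $M_2$ is simply too coarse. Your step~3 (lower-bounding $M_2$ via $S$ and Lemma~\ref{L-SM2}) does not help here: it gives another lower bound on $M_2[\nu]$, but you already had $M_2[\nu]\ge p\NN^2$, and what you lack is a stronger \emph{upper} bound tying $M_2$ to the number of moves.

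The missing ingredient is Lemma~\ref{L-extreme}: for an \emph{extreme} move, the increase in \emph{spread} is at least $3$ times the \emph{square} of the increase in $M_2$. The paper first passes to extreme moves (Corollary~\ref{C-extreme}), then telescopes $S[\bar\mu_\ell]\ge 3\sum_i h_i^2$ with $h_i=M_2[\bar\mu_i]-M_2[\bar\mu_{i-1}]$, applies Cauchy--Schwarz to get $S\ge 3M_2^2/\ell$, and finally uses Lemma~\ref{L-SM2} in the \emph{other} direction, as an upper bound $S\le M_2^{1/2}/\sqrt{3}$, to conclude $\ell\ge 3\sqrt{3}\,M_2^{3/2}\ge (3p)^{3/2}\NN^3$ (this is Lemma~\ref{L-initial}). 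It is precisely the quadratic link in Lemma~\ref{L-extreme} that converts the exponent from $2$ to $3$. Your outline invokes Lemma~\ref{L-SM2} but never Lemma~\ref{L-extreme}, so the cubic scaling cannot emerge.

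Beyond this, the paper handles the one-sided, weight-constrained situation not by bounding $\sum_j w_j$ but by (i) a mirroring trick (Theorem~\ref{T-asym}) that reduces to the symmetric case of Lemma~\ref{L-initial}, and (ii) an induction with geometric decay on $\mu_{\max}\{x>r\}$ (Theorem~\ref{T-gen}), choosing a threshold $u$ where the right-tail mass drops by a fixed factor. The weight constraint enters only to cap the \emph{number} of positively-centered moves by $n$; the weights $w_j$ themselves are never summed. Your proposed ``recursive control'' via Abel summation on $\sum_j w_j$ is a different route, and as sketched it does not close: the best you get is $\sum_{c_j>0} w_j\le n^2$, which combined with your linear $M_2$ bound gives nothing near $n^{1/3}$.
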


For general move sequences we have the following almost tight
result, which might be of some independent interest.
%MTsoda the lines below are new
In particular,
it shows that the weight constraint only has a logarithmic
effect on the maximal overhang.

\begin{theorem}\label{T-m2}
If a distribution~$\nu$ is obtained from a distribution~$\mu$ with
$\mu\{x\le 0\}\le n$ and $\mu\{x>0\}=0$, where $n\geq 1$, by a move
sequence of length at most $n$, then $\nu\{x\ge 2 n^{1/3}\log_2
n\}<1$.
\end{theorem}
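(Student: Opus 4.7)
The plan is to argue by contradiction using a dyadic decomposition of the interval $[0,D]$ combined with Lemma~\ref{L-SM2}. Suppose, for contradiction, that $\nu\{x\ge D\}\ge 1$ with $D=2n^{1/3}\log_2 n$. Let $k=\lfloor\log_2 n\rfloor$ and set $a_j:=2n^{1/3}j$ for $j=0,1,\ldots,k$, so the levels $a_0=0,a_1,\ldots,a_k\le D$ partition the relevant range into $k$ slabs of width $2n^{1/3}$. For each $j$, let $s_j:=\mm{x\ge a_j}$ denote the maximum mass ever at positions $\ge a_j$ during the move sequence. From the hypotheses on $\mu$ we have $s_0\le\M_0[\mu]\le n$; from the contradiction assumption we have $s_k\ge\nu\{x\ge D\}\ge 1$.

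The technical heart of the argument is to prove a multiplicative-decrease bound of the form $s_j\le s_{j-1}/2$ for each $j\ge 1$. The intended tool is Lemma~\ref{L-SM2} applied to the sub-distribution $\mu_t|_{x\ge a_{j-1}}$ (with origin shifted to $a_{j-1}$) at the time $t$ when $\mu_t\{x\ge a_j\}=s_j$: this shifted sub-distribution has total mass at most $s_{j-1}$ and places at least $s_j$ mass at shifted position $\ge 2n^{1/3}$. Combining the resulting bound $S^2\le\frac{1}{3}\M_2\M_0^3$ with the per-move upper bounds $\Delta\M_2\le m/4$ and $\Delta S\le m^2/4$ (where $m$ is the mass moved) and the global move-count budget $\ell\le n$ should force the factor-of-two decrease: pushing $s_j$ units of mass a distance of $2n^{1/3}$ starting from only $s_{j-1}$ source mass is incompatible with the spread inequality unless $s_j\le s_{j-1}/2$. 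The slab width $2n^{1/3}$ is chosen precisely as the threshold where this loss of a factor of two is forced.

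Granting the halving bound, iteration across the $k$ slabs gives $s_k\le s_0/2^k\le n/n=1$. A small strengthening of the halving step (via slight strictness in the constant, or a minor adjustment in the slab width or number) upgrades this weak bound to the strict inequality $s_k<1$, contradicting $s_k\ge 1$ and yielding $\nu\{x\ge D\}<1$ as required.

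The principal obstacle is rigorously establishing the halving bound. In Theorem~\ref{T-m1} the weight-constrained hypothesis supplies a direct per-level bound on the number of moves, which keeps the iterated L-SM2 application tight; here we only have the global constraint $\ell\le n$, and translating this global budget into effective per-level flux bounds on the sub-distributions, while keeping track of how much of the initial mass in the interval $[a_{j-1},a_j]$ can be consumed, is the delicate step. The $\log_2 n$ factor in $D$ is the natural price of this looser per-level control: exactly $\log_2 n$ halving steps are needed to reduce $n$ to $1$, and each contributes a slab of width $2n^{1/3}$ to the overhang bound.
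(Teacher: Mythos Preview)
Your dyadic skeleton and contradiction setup are the same as the paper's, but the crucial step --- the halving bound $s_j\le s_{j-1}/2$ --- is not established, and the mechanism you propose does not work as stated. Applying Lemma~\ref{L-SM2} to the restricted sub-distribution $\mu_t|_{x\ge a_{j-1}}$ is problematic on two counts. First, that sub-distribution does not evolve under a sequence of moves: any move whose interval straddles $a_{j-1}$ can inject or extract mass, so your ``per-move bounds $\Delta\M_2\le m/4$, $\Delta S\le m^2/4$'' do not apply to it. Second, Lemma~\ref{L-SM2} is a static inequality on a single distribution; it gives no lower bound on move count by itself, and combining an \emph{upper} bound on $S$ with \emph{upper} bounds on $\Delta\M_2$, $\Delta S$ does not assemble into the lower bound you need. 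You correctly flag this as the principal obstacle, but you have not overcome it.

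The paper avoids the per-level budget problem entirely. Rather than fixing positions $a_j=2n^{1/3}j$ and trying to prove halving of the mass at every level, it fixes the mass thresholds $n/2^i$ and defines the positions $u_i$ \emph{adaptively} as the largest $x$ with $\mm{x\ge u_i}\ge n/2^i$. From the contradiction hypothesis one gets $u_k\ge 2kn^{1/3}$, and then a single pigeonhole step produces \emph{one} index $i$ with $u_i-u_{i-1}\ge 2n^{1/3}$. Theorem~\ref{T-asym} (the mirroring result, which already packages Lemmas~\ref{L-SM2} and~\ref{L-extreme} together with Cauchy--Schwarz via Lemma~\ref{L-initial}) is applied \emph{once} to this single gap, with $r=u_{i-1}$, $m<n/2^{i-1}$ and $p\ge 1/2$, and yields more than $n$ moves centered to the right of $u_{i-1}+\tfrac12$ --- a contradiction. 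No allocation of the global move budget across levels is ever needed.

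Your framework can be salvaged by replacing the bare Lemma~\ref{L-SM2} argument with Theorem~\ref{T-asym}: at any level where $s_j>s_{j-1}/2$, Theorem~\ref{T-asym} with $r=a_{j-1}$, $m=s_{j-1}$, $d=2n^{1/3}$, $p>1/2$ already forces more than $n$ moves, a direct contradiction; otherwise halving holds at every level and $s_k<1$. This is logically equivalent to the paper's argument, just with the roles of ``fixed positions'' and ``fixed mass thresholds'' swapped. The essential point is that the move-count lower bound comes from Theorem~\ref{T-asym}, not from Lemma~\ref{L-SM2} applied to a restriction.
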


We show next that Theorem~\ref{T-m1} does indeed imply
Theorem~\ref{T-main}, the main result of this paper.

\subsection{From overhang to mass movement}\label{SS-corres}

The moves of Definition~\ref{D-move} capture the essential effect
that a block can have on the collections of forces within a stack.
They fail to take into account, however, the fact that the
\emph{weight} of a block is ``used up'' by the move and is then
lost. To faithfully simulate the effect of unit weight blocks we
introduce the slightly modified definition of \emph{lossy moves}:

\begin{definition}[Lossy moves]\label{D-lossy}
% A \emph{lossy move} $v=([a,b],\delta^\downarrow)$ consists of an
% interval $[a,b]$ and a signed distribution $\delta$ on~$[a,b]$ with
% $M_0[\delta]=-1$ and $M_1[\delta]=-\frac{a+b}{2}$. A lossy move~$v$
% can be \emph{applied} to a distribution $\mu$ if the
% signed distribution $\mu'=\mu+\delta$ is a distribution, in which
% case the result of the application is~$\mu'$ and is denoted by
% $v\mu$. If $v=([a,b],\delta)$ is a move, we let
%
If $v=([a,b],\delta)$ is a move, then the \emph{lossy} move
$v^\downarrow$ associated with it is
$v^\downarrow=([a,b],\delta^\downarrow)$, where $\delta^\downarrow =
\delta-\{(\frac{a+b}{2},1)\}$. A lossy move $v^\downarrow$ can be
applied to a distribution $\mu$ if $\mu'=\mu+\delta^\downarrow$ is a
distribution, in which case we denote the result~$\mu'$ of this
application by $v^\downarrow\mu$.
\end{definition}

Note that if $v^\downarrow=([a,b],\delta^\downarrow)$ is a lossy
move and $\mu'=v^\downarrow \mu$, then $M_0[\mu']=M_0[\mu]-1$ and
$M_1[\mu']=M_1[\mu]-\frac{a+b}{2}$. Hence, lossy moves
do not preserve total mass or center of mass.

If $V=\langle v_1,v_2,\ldots,v_\ell\rangle$ is a sequence of moves,
we let $V^\downarrow=\langle v^\downarrow_1, v^\downarrow_2, \ldots,
v^\downarrow_\ell\rangle$ be the corresponding sequence of lossy
moves. If~$\mu_0$ is an initial distribution, we can naturally
define the sequence of distributions $\mu_0,\mu_1, \ldots,
\mu_\ell$, where $\mu_i=v^\downarrow_i \mu_{i-1}$  for $1\le i\le
\ell$, obtained by applying~$V^\downarrow$ to~$\mu_0$.

A collection of forces~$F_i$ may also be viewed as mass
distribution. The following lemma is now a simple formulation of the
definitions and the discussion of Section~\ref{SS-force}:

\begin{lemma}\label{L-block} Let $\{B_1,B_2,\ldots,B_n\}$ be a balanced stack. Let $F_i$ be
a collection of balancing forces acting between $\{B_0,\ldots,B_i\}$
and $\{B_{i+1},\ldots,B_n\}$, for $0\le i\le n$. Let~$x_i$ be the
$x$-coordinate of the left edge of~$B_i$. Then, $F_{i+1}$ can be
obtained from~$F_i$ by a lossy move in the interval $[x_i,x_i{+}1]$.
\end{lemma}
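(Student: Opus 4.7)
The plan is to recognize that passing from slice $i$ to slice $i+1$ corresponds to ``processing'' a single block $B$ whose left edge lies at $x_i$, and that the equilibrium condition on $B$ translates directly into the algebraic form of a lossy move on $[x_i,x_i+1]$.

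First, I would identify the signed distribution $\delta := F_{i+1}-F_i$. As the discussion at the end of Section~\ref{SS-force} explains, the forces in $F_i\setminus F_{i+1}$ are exactly the upward forces applied on $B$ from below, while the forces in $F_{i+1}\setminus F_i$ are exactly the upward forces applied by $B$ to the blocks resting on it. All of these forces act at $x$-coordinates lying within the horizontal extent $[x_i,x_i+1]$ of $B$, so $\delta$ is a signed distribution supported on $[x_i,x_i+1]$.

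Next, I would apply Definition~\ref{D:equi} to $B$. Writing $(x_j,f_j)$ for the upward forces pushing on $B$ from below and $(x'_j,f'_j)$ for the upward forces exerted by $B$ on blocks above, equilibrium gives $\sum f'_j-\sum f_j = -1$ and $\sum x'_j f'_j-\sum x_j f_j = -(x_i+\frac12)$; by the definition of $\delta$ these are exactly $\M_0[\delta]=-1$ and $\M_1[\delta]=-(x_i+\frac12)$. I would then write $\delta = \delta_0 - \{(x_i+\frac12,\,1)\}$ where $\delta_0 := \delta + \{(x_i+\frac12,\,1)\}$ satisfies $\M_0[\delta_0]=\M_1[\delta_0]=0$ and is supported on $[x_i,x_i+1]$. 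Thus $v := ([x_i,x_i+1],\,\delta_0)$ is a legitimate move in the sense of Definition~\ref{D-move}, and its associated lossy move $v^\downarrow$ (Definition~\ref{D-lossy}) has $v^\downarrow F_i = F_i + \delta = F_{i+1}$; applicability is automatic since $F_{i+1}$ is by assumption a (nonnegative) distribution of forces.

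I do not anticipate any significant obstacle: this lemma is essentially the formal translation between the physical force-balance picture and the algebraic setting of lossy moves. The only point requiring care is to verify that the single ``lost'' unit of mass is correctly positioned at the center of gravity $x_i+\frac12$ of $B$, which is exactly what the moment equation in the definition of equilibrium guarantees.
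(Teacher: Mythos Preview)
Your proposal is correct and follows essentially the same approach as the paper. In fact, the paper does not give a formal proof of this lemma at all; it simply says that the lemma ``is now a simple formulation of the definitions and the discussion of Section~\ref{SS-force},'' and your argument is exactly the detailed unpacking of that discussion into the language of Definitions~\ref{D-move} and~\ref{D-lossy}.
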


As an immediate corollary, we get:

\begin{lemma}\label{L-c1} If there is a stack composed of~$n$
blocks of length~$1$ and weight~$1$ that achieves an overhang
of~$d$, then there is sequence of at most~$n{-}1$ lossy moves that
transforms a distribution~$\mu$ with $\M_0[\mu]= \mu\{x\le 0\}=n$
and $\mu\{x>0\}=0$ into a distribution~$\mu'$ with $\mu'\{x\ge d{-}1\}
\ge 1$.
\end{lemma}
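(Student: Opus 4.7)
My plan is to derive this as a direct corollary of Lemma~\ref{L-block}. Starting from a balanced $n$-block stack with overhang $d$, I would fix a collection of balancing forces (which exists by the definition of balance) and, for each $0 \le i \le n$, let $F_i$ denote the resulting collection of upward forces acting between $\{B_0, \ldots, B_i\}$ and $\{B_{i+1}, \ldots, B_n\}$. Each $F_i$ is a finite collection of point forces, which I will view as a discrete distribution (a mass at every $x$-coordinate where a force is applied, with magnitude equal to the force).

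The first step is to identify the initial distribution $\mu$ with $F_0$. Every force in $F_0$ is exerted by the table $B_0$, so every such force acts at some $x \le 0$, giving $\mu\{x > 0\} = 0$. Its total magnitude equals the total weight borne by the table, namely $n$ (by global vertical balance for the whole stack), so $\M_0[\mu] = \mu\{x \le 0\} = n$, matching the hypothesis.

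Next, I would pick a most-overhanging block $B_k$, so that $x_k + 1 = d$ and hence $x_k = d-1$. By Lemma~\ref{L-block}, each transition $F_i \to F_{i+1}$ is obtained by a single lossy move, so chaining these for $i = 0, 1, \ldots, k-2$ produces $F_{k-1}$ from $F_0$ via $k-1 \le n-1$ lossy moves; this is precisely the sequence whose existence we must establish.

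Finally, I would set $\mu' := F_{k-1}$ and verify $\mu'\{x \ge d-1\} \ge 1$. The subcollection of $F_{k-1}$ actually acting on $B_k$ lies along $B_k$'s bottom edge, i.e.\ at $x$-coordinates in $[x_k, x_k+1] = [d-1, d]$, and by the vertical equilibrium of $B_k$ its total magnitude is at least $1$ (weight $1$ plus any downward reactions from blocks resting on $B_k$). Hence $\mu'\{x \ge d-1\} \ge 1$, as required. I do not anticipate any substantive obstacle: once Lemma~\ref{L-block} is in hand this is essentially a bookkeeping exercise. The only subtlety worth flagging is that the most-overhanging block $B_k$ need not sit at the top of the stack, which is why one stops at $F_{k-1}$ rather than $F_{n-1}$; the bound $k-1 \le n-1$ still delivers the stated move count.
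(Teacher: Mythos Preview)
Your proposal is correct and follows essentially the same approach as the paper: set $\mu=F_0$, $\mu'=F_{k-1}$ for the most-overhanging block $B_k$, invoke Lemma~\ref{L-block} to obtain the $k-1\le n-1$ lossy moves, and read off the required properties of $\mu$ and $\mu'$ from the table-support and equilibrium conditions. Your explicit remark that $B_k$ need not be at the top (hence stopping at $F_{k-1}$) is a nice clarification the paper leaves implicit.
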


\begin{proof} Let $\{B_1,B_2,\ldots,B_n\}$ be a balanced stack and
let~$B_k$ be a block in it that achieves an overhang of~$d$. As
before, we let~$F_i$ be a collection of balancing forces acting
between $\{B_0,\ldots,B_i\}$ and $\{B_{i+1},\ldots,B_n\}$. We let
$\mu=F_0$ and $\mu'=F_{k{-}1}$. It follows from Lemma~\ref{L-block}
that $\mu'$ may be obtained from~$\mu$ by a sequence of~$k{-}1$
lossy moves. As all the forces in $\mu=F_0$ are forces applied by
the table $B_0$, and as the table supports the weight of the~$n$
blocks of the stack, we have $\M_0[\mu_0]= \mu_0\{x\le 0\} =n$ and
$\mu\{x>0\}=0$. As the forces in $\mu'=F_{k-1}$ must at least
support the weight of~$B_k$, we have $\mu'\{d{-}1\le x\le d\}\ge 1$.
\end{proof}

The next simple lemma shows that sequences of lossy moves can be
easily converted into weight-constrained sequences of moves and
distributions that ``dominate'' the original sequence.
%UZ: Rewording and adding quotation marks to `dominate'.

\begin{lemma} If $\mu_0,\mu_1,\ldots,\mu_\ell$ is a sequence of
distributions obtained by a sequence of \emph{lossy} moves, then
there exists a sequence of distributions
$\mu'_0,\mu'_1,\ldots,\mu'_\ell$ obtained by a \emph{weight-constrained}
sequence of moves such that $\mu'_0=\mu_0$, and
$\mu'_i(x)\ge \mu_i(x)$, for every $1\le i\le \ell$ and $x\in
\reals$.
\end{lemma}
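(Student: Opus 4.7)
The plan is to build the weight-constrained sequence in the most direct way possible: for each lossy move $v_i^\downarrow=([a_i,b_i],\delta_i-\{(c_i,1)\})$ in the given sequence (where $c_i=\tfrac{a_i+b_i}{2}$), use the corresponding ordinary move $v_i=([a_i,b_i],\delta_i)$, and set $\mu'_0=\mu_0$ and $\mu'_i=v_i\mu'_{i-1}$. Intuitively, we are just declining to ``throw away'' the unit of mass that the lossy move discards at its center; the dominating sequence is what you get by accumulating all the discarded unit masses back into the distribution.

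First I would prove the dominance statement $\mu'_i(x)\ge\mu_i(x)$ by a short induction. Subtracting the two recurrences gives
\[
\mu'_i-\mu_i \;=\; (\mu'_{i-1}-\mu_{i-1})+\{(c_i,1)\},
\]
so by induction $\mu'_i=\mu_i+\sum_{j\le i}\{(c_j,1)\}$, a sum of non-negative contributions. This simultaneously shows that each $\mu'_i$ is a genuine distribution (hence each regular move $v_i$ is actually applicable to $\mu'_{i-1}$), because $\mu'_i(x)\ge\mu_i(x)\ge 0$.

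Finally I would verify the weight-constrained condition. Fix $a\in\reals$ and let $N_a=|\{\,i:c_i>a\,\}|$ be the number of moves in $V=\langle v_1,\dots,v_\ell\rangle$ centered strictly to the right of $a$. Evaluating the identity $\mu'_\ell=\mu_\ell+\sum_{i=1}^\ell\{(c_i,1)\}$ on $\{x>a\}$ gives
\[
\mu'_\ell\{x>a\} \;=\; \mu_\ell\{x>a\}+N_a \;\ge\; N_a,
\]
so $\mu_{\max}\{x>a\}\ge\mu'_\ell\{x>a\}\ge N_a$, which is precisely the weight-constrained condition in Definition~\ref{D-move}'s sense. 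There is no real obstacle here: the only point that requires any care is checking that the lossy applicability hypothesis ($\mu_{i-1}+\delta_i-\{(c_i,1)\}\ge 0$) transfers to applicability of the non-lossy move ($\mu'_{i-1}+\delta_i\ge 0$), and this is immediate from the dominance inequality already proved inductively at the previous step.
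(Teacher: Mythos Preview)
Your proof is correct and takes essentially the same approach as the paper: both replace each lossy move $v_i^\downarrow$ by the corresponding ordinary move $v_i$, observe that the difference $\mu'_i-\mu_i$ accumulates the discarded unit masses $\sum_{j\le i}\{(c_j,1)\}$, and deduce the weight constraint from $\mu'_\ell\{x>a\}\ge N_a$. The paper phrases this as the extra masses being ``frozen'' rather than writing out your explicit inductive identity, but the content is identical; your version is arguably cleaner in that it also makes the applicability check explicit.
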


\begin{proof} The sequence $\mu'_0,\mu'_1,\ldots,\mu'_\ell$ is
obtained by performing exactly the same moves used to obtain the
sequence $\mu_0,\mu_1,\ldots,\mu_\ell$, treating them now as moves
rather than lossy moves. More formally, if
$\mu_i=v_i^\downarrow\mu_{i-1}$, we let $\mu'_i=v_i\mu'_{i-1}$. If
$v_i=([a-\frac12,a+\frac12],\delta)$, then $\mu'_i$ now has an extra
mass of size~$1$ at $a$. This mass is \emph{frozen}, and will not be
touched by subsequent moves. Hence, if~$k$ moves have their center
beyond position~$a$, then $\mu'_{{\max}}\{x>a\}\ge
\mu'_\ell\{x>a\}\ge k$, as required by the definition of
%UZ: replaced \mu' by \mu'_\ell
weight-constrained sequences.
\end{proof}

It is now easy to see that Theorem~\ref{T-m1} together with
Lemmas~\ref{L-block} and~\ref{L-c1} imply Theorem~\ref{T-main}.

\section{Bounds on mass movement problems}\label{sec:bounds}

This section is devoted to the proofs of Theorems~\ref{T-m1}
and~\ref{T-m2}. As mentioned, Theorem~\ref{T-m1} implies
Theorem~\ref{T-main}, which states that an $n$-block stack can have
an overhang of at most $6n^{1/3}$.
%UZ: Changed 8n^{1/3}+1 to 6n^{1/3}.

\subsection{Extreme moves and splits}

We begin by considering an important class of moves:

\begin{definition}[Extreme moves]\label{D-extreme}
An \emph{extreme move} $\bar{v}$ is defined solely as an interval
$[a,b]$. An extreme move $\bar{v}$ can be applied to any
distribution~$\mu$ resulting in the distribution $\mu'=\bar{v}\mu$
such that $\mu'\{a{<}x{<}b\}=0$, $\mu'(x)=\mu(x)$ for every
$x\not\in [a,b]$, $\M_0[\mu]=\M_0[\mu']$ and $\M_1[\mu]=\M_1[\mu']$.
In other words, an extreme move moves all the mass in the interval
$[a,b]$ into the endpoints of this interval while maintaining the
center of mass.
If~$v$ is a move on an interval $[a,b]$, we let~$\bar{v}$ denote
the extreme move on $[a,b]$. If~$V$ is a sequence of moves, we
let~$\bar{V}$ denote the corresponding sequence of extreme moves.
\end{definition}

Closely related to Lemma~\ref{L-SM2} is the following lemma:

\begin{lemma}\label{L-extreme} If $\mu_1$ is obtained from $\mu_0$ by an
extreme move (in an interval of length~1) then
$$S[\mu_1]-S[\mu_0] \;\ge\; 3(\M_2[\mu_1]-\M_2[\mu_0])^2\;.$$
\end{lemma}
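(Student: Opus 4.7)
The plan is to reduce the statement to a sharp Poincar\'e-type inequality for zero-mean functions on an interval of unit length, via the CDF representation of the spread. First I would translate so that the centroid of $\mu_0|_{[a,b]}$ lies at the origin, writing $a=-p$ and $b=q$ with $p+q=1$; the extreme move then replaces $\mu_0|_{[-p,q]}$ (of total mass $m$ and centroid $0$) by $\nu=\{(-p,mq),(q,mp)\}$, as forced by preservation of $\M_0$ and $\M_1$. I would next observe that both $\Delta S:=S[\mu_1]-S[\mu_0]$ and $\Delta \M_2:=\M_2[\mu_1]-\M_2[\mu_0]$ depend only on these restrictions: for $\M_2$ because $\mu_0$ and $\mu_1$ agree outside $[a,b]$; for $S$ because pairs of atoms lying both outside $[a,b]$ contribute identically, and for any atom $(y,m')$ with $y>b$ the cross contribution $m'\sum_{x_i\in[a,b]}m_i(y-x_i)=m'(my-m\cdot 0)$ depends only on the preserved mass and centroid of the interior part (symmetrically for $y<a$).

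Next I pass to the CDFs $F_0,F_1$ of $\mu_0|_{[-p,q]}$ and $\nu$ and set $G:=F_0-F_1$; on $(-p,q)$ we have $F_1\equiv mq$, and integration by parts gives $\int_{-p}^q F\,dt = qm-\M_1$ (even allowing atoms at the endpoints $-p,q$), so $\int_{-p}^q G\,dt=0$. Using the identity $S[\mu]=\int F(t)(m-F(t))\,dt$ (verified by summing $(x_{k+1}-x_k)F(x_k)(m-F(x_k))$ over consecutive atoms and matching against $\sum_{i<j}(x_j-x_i)m_im_j$), together with $\M_2[\mu]=q^2m-2\int_{-p}^q tF(t)\,dt$, a short algebraic manipulation yields
\[\Delta S \;=\; \int_{-p}^q G(t)^2\,dt, \qquad \Delta \M_2 \;=\; 2\int_{-p}^q t\,G(t)\,dt.\]
For $\Delta S$ one expands $F_1(m-F_1)-F_0(m-F_0)=(F_1-F_0)(m-F_1-F_0)$ and uses $\int G=0$ to discard the remaining linear-in-$G$ term.

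It then suffices to prove $\int G^2 \ge 12(\int tG)^2$. Since $\int G=0$, the value of $\int tG$ is unchanged if $t$ is replaced by $t-\bar t$ for any constant $\bar t$, so Cauchy--Schwarz gives $(\int tG)^2 \le \bigl(\int_{-p}^q(t-\bar t)^2\,dt\bigr)\int G^2$; the choice $\bar t=(q-p)/2$ (the midpoint of $[-p,q]$) minimizes the first factor to $\int_{-1/2}^{1/2} s^2\,ds=\tfrac{1}{12}$. Substituting back, $(\Delta \M_2)^2=4(\int tG)^2\le\tfrac{1}{3}\Delta S$, which rearranges to the lemma. The main obstacle is really the first two steps: cleanly proving that $\Delta S$ localizes to $[a,b]$ and deriving the CDF identities for discrete, possibly atomic, distributions. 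Once those are in hand, the rest is the sharp Poincar\'e constant on a unit interval, which delivers precisely the factor $3$ appearing in the statement.
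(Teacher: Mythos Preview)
Your argument is correct, and it takes a genuinely different---and considerably cleaner---route than the paper's proof. The paper translates so that the interval is $[-\tfrac12,\tfrac12]$, then adds endpoint mass to force the interior centroid to~$0$, normalizes to unit mass, and works with the \emph{quantile} function $g(t)$ together with the representation $S[\nu_0]=2\int t\,g(t)\,dt$ from Lemma~\ref{L-SM2}. The endgame there splits into two cases according to whether $c=M_2[\nu_0]\le\tfrac1{12}$ (where Lemma~\ref{L-SM2} suffices) or $c>\tfrac1{12}$ (where an auxiliary comparison function $h$ and two separate inequalities are needed). By contrast, you translate so that the \emph{centroid} sits at~$0$ (leaving the interval as $[-p,q]$ with $p+q=1$) and work with the CDF difference $G=F_0-F_1$. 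Your two identities $\Delta S=\int G^2$ and $\Delta M_2=2\int tG$, combined with $\int G=0$, reduce the lemma in a single stroke to Cauchy--Schwarz against $t-\bar t$ on a unit-length interval, which delivers the sharp constant $\tfrac1{12}$ and hence the factor~$3$ without any case analysis. The paper's approach has the minor advantage of reusing Lemma~\ref{L-SM2} and its quantile machinery; yours is more self-contained, makes the role of the Poincar\'e constant explicit, and avoids the somewhat ad hoc construction of~$h$. One small point worth tightening in a full write-up: you should note explicitly that if the interior mass is zero or concentrated at an endpoint the statement is trivial, so that $p,q>0$ and the centroid translation is well defined.
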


The proof of Lemma~\ref{L-extreme} is again deferred to
Section~\ref{SS-proofs}.

We next define a natural partial order on distributions:

\begin{definition}[Splitting]
Let $\mu$ and $\mu'$ be two distributions. We say that $\mu'$
\emph{is a basic split of}~$\mu$, denoted $\mu\preceq_1\mu'$, if
$\mu'$ is obtained by taking one of the point masses $(x_i,m_i)$
of~$\mu$ and replacing it by a collection
$\{(x'_1,m'_1),\ldots,(x'_\ell,m'_\ell)\}$ of point masses with
total mass $m_i$ and center of mass at~$x_i$. We say that~$\mu'$
\emph{splits into}~$\mu$, denoted $\mu\preceq \mu'$, if~$\mu'$ can
be obtained from~$\mu$ by a sequence of zero or more basic splits.
\end{definition}

The following two lemmas summarize simple properties of splits and
extreme moves that will be explicitly or implicitly used in this
section. Their obvious proofs are omitted.

\begin{lemma}\label{L-simple1}\ \vspace{-3mm}
\begin{enumerate}
\item[(i)] If $\mu\preceq \mu'$ and $\mu'\preceq \mu''$, then
$\mu\preceq \mu''$.
\item[(ii)] If $\mu_1\preceq \mu_1'$ and $\mu_2\preceq \mu_2'$, then
$\mu_1+\mu_2 \preceq \mu'_1+\mu'_2$.
\item[(iii)] For any distribution~$\mu$ we have $\{(\C[\mu],M_0[\mu[)\}
\preceq \mu$.
\item[(iv)] If $\mu=\{(x_1,m_1),(x_2,m_2)\}$ and
$\mu'=\{(x'_1,m'_1),(x'_2,m'_2)\}$, where $x'_1\le x_1\le x_2\le x'_2$,
$\M_0[\mu]=\M_0[\mu']$ and $\C[\mu]=\C[\mu']$, then $\mu\preceq
\mu'$.
\end{enumerate}
\end{lemma}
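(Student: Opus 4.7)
The plan is to handle the four parts in order, exploiting the fact that (i)--(iii) follow immediately from the definition of basic split while (iv) requires one small computation.

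For (i), I would simply concatenate the two witnessing sequences of basic splits: if $\mu \preceq \mu'$ via $\sigma_1,\dots,\sigma_r$ and $\mu' \preceq \mu''$ via $\tau_1,\dots,\tau_s$, then $\sigma_1,\dots,\sigma_r,\tau_1,\dots,\tau_s$ witnesses $\mu \preceq \mu''$. For (ii), I would induct on the total number of basic splits used to reach $\mu_1'$ from $\mu_1$, and then separately on the number used to reach $\mu_2'$ from $\mu_2$. The key lifting observation is that a basic split of $\mu_1$ that replaces a point mass $(x_i,m_i)$ of $\mu_1$ by a collection $\{(y_j,n_j)\}$ (with $\sum n_j = m_i$ and $\sum y_j n_j = x_i m_i$) can be applied to $\mu_1+\mu_2$: replace the point $(x_i,\mu_1(x_i)+\mu_2(x_i))$ of $\mu_1+\mu_2$ by $\{(y_j,n_j)\} \cup \{(x_i,\mu_2(x_i))\}$ (the extra point absorbing the $\mu_2$-contribution). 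This is a valid basic split and produces $\mu_1'+\mu_2$ in the limit; the symmetric argument yields $\mu_1'+\mu_2 \preceq \mu_1'+\mu_2'$, and (i) closes the chain. For (iii), a single basic split replacing the one point $(\C[\mu],\M_0[\mu])$ by the entire point-mass collection of $\mu$ works, since by the definitions of $\M_0$ and $\C$ the collection has total mass $\M_0[\mu]$ and center of mass $\C[\mu]$.

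The only calculation is for (iv), which I would realize as two basic splits applied to $\mu$. Split $(x_1,m_1)$ into masses $a_1,a_2$ at $x'_1, x'_2$, and split $(x_2,m_2)$ into masses $b_1,b_2$ at $x'_1, x'_2$. Preservation of mass and center of mass at each split forces
\[
a_1 = m_1\,\frac{x'_2-x_1}{x'_2-x'_1},\quad a_2 = m_1\,\frac{x_1-x'_1}{x'_2-x'_1},\quad b_1 = m_2\,\frac{x'_2-x_2}{x'_2-x'_1},\quad b_2 = m_2\,\frac{x_2-x'_1}{x'_2-x'_1},
\]
all nonnegative thanks to $x'_1 \le x_1 \le x_2 \le x'_2$ (and the degenerate cases $x_1=x_2$ or $x'_1=x'_2$ are trivial). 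The resulting distribution has mass $a_1+b_1$ at $x'_1$ and $a_2+b_2$ at $x'_2$; the global identities $\M_0[\mu]=\M_0[\mu']$ and $\M_1[\mu]=\M_1[\mu']$ then force $a_1+b_1=m'_1$ and $a_2+b_2=m'_2$, so the composite is exactly $\mu'$. The closest thing to a conceptual obstacle is simply observing that a basic split may place new mass at a position already occupied by other point masses of the distribution, so that two successive two-point splits can legitimately ``merge'' into the two-point distribution $\mu'$; once that point is accepted, the four-line identity above finishes the argument.
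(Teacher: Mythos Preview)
Your proof is correct. The paper itself omits the proof entirely, stating only that the ``obvious proofs are omitted,'' so there is no approach to compare against; your argument supplies exactly the kind of routine verification the authors had in mind, and your handling of part~(iv) via two explicit two-point splits (with the observation that merged masses at $x'_1,x'_2$ pose no obstacle) is clean and complete.
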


\begin{lemma}\label{L-simple2}\ \vspace{-3mm}
\begin{enumerate}
\item[(i)] If $v\mu$ is defined then $v\mu\preceq \bar{v}\mu$.
\item[(ii)] If $\bar{v}$ is an extreme move then $\mu \preceq
\bar{v}\mu$.
\item[(iii)] If $\bar{v}$ is an extreme move then $\bar{v}(\mu_1+\mu_2) =
\bar{v}\mu_1+\bar{v}\mu_2$.
\end{enumerate}
\end{lemma}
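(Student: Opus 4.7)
The plan is to verify each of the three parts directly from the definitions; the basic split operation is precisely designed to mimic the redistribution of a point mass while preserving total mass and first moment, which is exactly the action of an extreme move, so all three items reduce to simple bookkeeping.

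For part (ii), let $\bar{v}$ be the extreme move on $[a,b]$. Since $\bar{v}$ leaves mass outside $[a,b]$ untouched, it is enough to exhibit a sequence of basic splits that carries $\mu$ restricted to $[a,b]$ to the corresponding two-point distribution at $\{a,b\}$. For each point mass $(x_i,m_i)$ of $\mu$ with $x_i\in(a,b)$, I apply the basic split that replaces it by $\{(a,\,m_i(b-x_i)/(b-a)),\,(b,\,m_i(x_i-a)/(b-a))\}$; the two new masses are strictly positive, their sum is $m_i$, and their weighted average is $x_i$, so this is a valid basic split. After carrying this out for every interior point mass and collecting the coincident atoms at $a$ and at $b$, the resulting distribution has all its $[a,b]$-mass concentrated at the two endpoints with the same total mass and first moment as $\mu_{[a,b]}$. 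Since $\bar{v}\mu$ is the unique such distribution, it coincides with the split sequence, giving $\mu\preceq\bar{v}\mu$.

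Part (i) follows by applying exactly the same procedure to $v\mu$ in place of $\mu$: because $v$ only redistributes mass inside $[a,b]$ and preserves both $\M_0$ and $\M_1$ there, $v\mu$ and $\mu$ agree outside $[a,b]$ and share the same total mass and first moment inside, so the endpoint masses of $\bar{v}\mu$ are determined by the same inputs whether one starts from $\mu$ or from $v\mu$. The interior-point splitting argument then shows $v\mu\preceq\bar{v}\mu$. For part (iii), I note that the action of $\bar{v}$ is manifestly linear in the distribution: the mass it places at $a$ equals $(b\,\M_0[\mu_{[a,b]}] - \M_1[\mu_{[a,b]}])/(b-a)$, the mass at $b$ equals $(\M_1[\mu_{[a,b]}] - a\,\M_0[\mu_{[a,b]}])/(b-a)$, and the remainder of $\mu$ is carried through unchanged. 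Since restriction to $[a,b]$ and the functionals $\M_0,\M_1$ are all linear in $\mu$, additivity $\bar{v}(\mu_1+\mu_2)=\bar{v}\mu_1+\bar{v}\mu_2$ is immediate.

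I do not expect any real obstacle here; the only minor care required is to skip any point masses already sitting exactly at $a$ or $b$, since the prescribed split would assign mass zero to one endpoint and thus fail the positivity requirement of a basic split — but such points need no redistribution in the first place.
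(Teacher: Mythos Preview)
Your proof is correct in all three parts; the paper itself omits the proof entirely, simply stating that the ``obvious proofs are omitted,'' so there is no approach to compare against. Your argument is exactly the natural one the authors presumably had in mind, including the care you take with point masses already at the endpoints.
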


The following lemma shows that splitting increases the second
moment.

\begin{lemma}\label{L-mom} If $\mu\preceq \mu'$ then $\M_2[\mu]\le \M_2[\mu']$.
\end{lemma}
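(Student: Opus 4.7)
The plan is to reduce to a single basic split by transitivity, and then invoke convexity of $x\mapsto x^2$.

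First, I would observe that $\preceq$ is by definition the reflexive-transitive closure of $\preceq_1$, so it suffices to prove $M_2[\mu]\le M_2[\mu']$ in the case $\mu\preceq_1 \mu'$; the general statement then follows by induction on the number of basic splits. Furthermore, $M_2$ is additive over disjoint unions of point masses, and a basic split only alters one point mass $(x,m)$ of $\mu$, replacing it by $\{(x'_1,m'_1),\ldots,(x'_\ell,m'_\ell)\}$ with $\sum_j m'_j = m$ and $\sum_j m'_j x'_j = mx$. Hence the difference $M_2[\mu']-M_2[\mu]$ equals
\[
\sum_{j=1}^\ell m'_j (x'_j)^2 \;-\; m x^2.
\]

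Second, I would note that $x = \sum_j (m'_j/m)\, x'_j$ is a convex combination of the $x'_j$, so Jensen's inequality applied to the convex function $t\mapsto t^2$ gives
\[
x^2 \;=\; \Bigl(\sum_{j=1}^\ell \tfrac{m'_j}{m}\, x'_j\Bigr)^2 \;\le\; \sum_{j=1}^\ell \tfrac{m'_j}{m}\,(x'_j)^2.
\]
Multiplying through by $m>0$ yields $mx^2 \le \sum_j m'_j (x'_j)^2$, so $M_2[\mu']-M_2[\mu]\ge 0$, completing the basic-split case.

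There is no real obstacle: the whole statement boils down to convexity of the square function, combined with the fact that a basic split preserves total mass and centers mass at the point being split. Together with Lemma~\ref{L-simple1}(i), this gives the lemma for arbitrary $\mu\preceq\mu'$.
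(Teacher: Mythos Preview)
Your proof is correct and follows essentially the same approach as the paper: both reduce to a single basic split by transitivity and linearity of $M_2$, and both then use that the second moment about the center of mass is nonnegative. The paper phrases this last step via the translation identity $M_2[\mu',c]-M_2[\mu,c]=M_2[\mu']-M_2[\mu]$ evaluated at $c=x$, while you phrase it as Jensen's inequality for $t\mapsto t^2$; these are the same fact.
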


\begin{proof} Due to the linearity of~$\M_2$ and the fact that~$\preceq$
is the transitive closure of~$\preceq_1$, it is enough to prove the
claim when $\mu=\{(x,m)\}$ is composed of a single mass and
$\mu'=\{(x'_1,m'_1),\ldots,(x'_k,m'_k)\}$ is obtained from~$\mu$ by
a basic split. For any
distribution~$\nu=\{(x_1,m_1),\ldots,(x_k,m_k)\}$ and any
$c\in\reals$ we define $\M_2[\nu,c]=\sum_{i=1}^k m_i(x_i-c)^2$ to be
the second moment of~$\nu$ about~$c$. As $\M_0[\mu]=\M_0[\mu']$ and
$\M_1[\mu]=\M_1[\mu']$, a simple calculation shows that
$\M_2[\mu',c]-\M_2[\mu,c]=\M_2[\mu']-\M_2[\mu]$, for any
$c\in\reals$. Choosing $c=x$ and noting that $\M_2[\mu,x]=0$ while
$\M_2[\mu',x]\ge 0$, we get the required inequality.
\end{proof}

The next lemma exhibits a relation between extreme moves and splitting.

\begin{lemma}\label{L-splitmove}
If $\mu\preceq \mu'$ and $v$ is a move that can be applied to~$\mu$,
then $v\mu \preceq \bar{v}\mu'$.
\end{lemma}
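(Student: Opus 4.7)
The plan is to reduce Lemma~\ref{L-splitmove} to a monotonicity property of extreme moves: if $\mu\preceq\mu'$, then $\bar{v}\mu\preceq\bar{v}\mu'$. Given this, Lemma~\ref{L-simple2}(i) gives $v\mu\preceq\bar{v}\mu$, and transitivity of $\preceq$ (Lemma~\ref{L-simple1}(i)) delivers $v\mu\preceq\bar{v}\mu\preceq\bar{v}\mu'$, as required.

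To prove the monotonicity, I would induct on the length of a basic-split sequence from $\mu$ to $\mu'$; the base case $\mu=\mu'$ is immediate, so assume $\mu\preceq_1\mu'$, with $\mu'$ obtained from $\mu$ by replacing a point mass $(x_0,m_0)$ with $\{(x_1,m_1),\ldots,(x_k,m_k)\}$ of total mass $m_0$ and center of mass $x_0$. Writing $\mu=\rho+\{(x_0,m_0)\}$ and $\mu'=\rho+\{(x_1,m_1),\ldots,(x_k,m_k)\}$ for the common remainder $\rho$, the additivity of $\bar{v}$ (Lemma~\ref{L-simple2}(iii)) together with Lemma~\ref{L-simple1}(ii) reduces the problem to showing $\bar{v}\{(x_0,m_0)\}\preceq\bar{v}\{(x_1,m_1),\ldots,(x_k,m_k)\}$.

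Let $[a,b]$ be the interval of $v$ and set $\eta=\bar{v}\{(x_1,m_1),\ldots,(x_k,m_k)\}$. If $x_0\notin(a,b)$, then $\bar{v}\{(x_0,m_0)\}=\{(x_0,m_0)\}$ is a single point with mass $m_0$ and center $x_0$, and $\eta$ shares the same mass and center, so one basic split produces $\eta$. The principal case is $x_0\in(a,b)$, where $\bar{v}\{(x_0,m_0)\}=\{(a,\alpha_0),(b,\beta_0)\}$ with $\alpha_0=m_0(b-x_0)/(b-a)$ and $\beta_0=m_0(x_0-a)/(b-a)$. Since $\eta$ places no mass in $(a,b)$, I would decompose $\eta=\eta_A+\eta_B$ according to support in $A=(-\infty,a]$ and $B=[b,\infty)$; both parts have positive mass, for otherwise $\eta$'s center $x_0$ would lie outside $(a,b)$. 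Let $M_A,C_A$ and $M_B,C_B$ denote the masses and centers of $\eta_A$ and $\eta_B$, so $C_A\le a\le b\le C_B$. Single basic splits yield $\{(C_A,M_A)\}\preceq\eta_A$ and $\{(C_B,M_B)\}\preceq\eta_B$, hence $\{(C_A,M_A),(C_B,M_B)\}\preceq\eta$ by Lemma~\ref{L-simple1}(ii). The two-point distributions $\{(a,\alpha_0),(b,\beta_0)\}$ and $\{(C_A,M_A),(C_B,M_B)\}$ share total mass $m_0$ and center $x_0$ and satisfy $C_A\le a\le b\le C_B$, so Lemma~\ref{L-simple1}(iv) gives $\{(a,\alpha_0),(b,\beta_0)\}\preceq\{(C_A,M_A),(C_B,M_B)\}$; transitivity then closes the chain.

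The crux is the case $x_0\in(a,b)$. Rather than trying to construct a basic-split sequence from $\{(a,\alpha_0),(b,\beta_0)\}$ directly to the possibly sprawling $\eta$, I interpose the two-point ``compression'' $\{(C_A,M_A),(C_B,M_B)\}$ of $\eta$; this cleanly separates the task into an $A$/$B$ decomposition of $\eta$ and the two-point widening inequality already handled by Lemma~\ref{L-simple1}(iv).
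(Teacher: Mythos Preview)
Your proof is correct and follows essentially the same route as the paper: both factor the claim as $v\mu\preceq\bar v\mu\preceq\bar v\mu'$, reduce the monotonicity of $\bar v$ to the single-point-mass case via additivity (Lemma~\ref{L-simple2}(iii)), and in the key case split $\bar v\mu'$ into its $(-\infty,a]$ and $[b,\infty)$ parts and invoke the two-point widening Lemma~\ref{L-simple1}(iv). Your induction on the length of the basic-split sequence and your open-interval case split $x_0\in(a,b)$ versus $x_0\notin(a,b)$ are in fact slightly more careful than the paper, which glosses over why the left and right parts are nonempty.
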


\begin{proof} We show that $v\mu \preceq \bar{v}\mu \preceq
\bar{v}\mu'$, and use~Lemma~\ref{L-simple1}$(i)$. The first relation
is just Lemma~\ref{L-simple2}$(i)$.
% follows as $v\mu \preceq \bar{v}v\mu = \bar{v}\mu$.
%
It remains to show $\bar{v}\mu \preceq \bar{v}\mu'$.
% It is easy
% to see that if $\mu=\sum_{i=1}^k \mu_i$, then $\bar{v}\sum_{i=1}^k
% \mu_i = \sum_{i=1}^k \bar{v}\mu_i$. (In other words, $\bar{v}$ is
% a linear operator.) Also, if $\mu_i\preceq \mu'_i$, for $1\le i\le
% k$, then clearly $\sum_{i=1}^k \mu_i \preceq \sum_{i=1}^k \mu'_i$.
% It is thus enough to prove the claim $\bar{v}\mu \preceq
% \bar{v}\mu'$ when~$\mu=\{(x,m)\}$ is composed of a single mass.
By Lemma~\ref{L-simple2}$(iii)$, it is enough to prove the claim for
$\mu=\{(x,m)\}$ composed of a single mass.
Let $[a,b]$ be the interval corresponding to~$\bar{v}$. There are
two cases. If $x\not\in [a,b]$, then  $$\bar{v}\mu \;=\; \mu
\;\preceq\; \mu' \;\preceq\; \bar{v}\mu'\;,$$ as required.
The more interesting case is when $x\in [a,b]$. Let
$\nu=\bar{v}\mu=\{(a,m_1),(b,m_2)\}$ and $\nu'=\bar{v}\mu'$. Let
$\nu'_\ell = \mu_{(-\infty,a]}$ and $\nu'_r = \mu_{[b,\infty)}$. As
$\bar{v}$ leaves no mass in $(a,b)$, we get that
$\nu'=\nu'_\ell+\mu'_r$. Let $\bar{m}_\ell=\M_0[\nu'_\ell]$,
$\bar{m}_r=\M_0[\nu'_r]$, $\bar{x}_\ell=\C[\nu'_\ell]$ and
$\bar{x}_r=\C[\nu'_r]$. As $\bar{x}_\ell\le a<b\le \bar{x}_r$, we
get using Lemma~\ref{L-simple1}$(ii)$ and $(iii)$ that
$$\nu \eq \{(a,m_1),(b,m_2)\} \;\preceq\;
\{(\bar{x}_\ell,\bar{m}_\ell),(\bar{x}_r,\bar{m}_r)\} \eq
\{(\bar{x}_\ell,\bar{m}_\ell)\} + \{(\bar{x}_r,\bar{m}_r)\}
\;\preceq\; \nu'_\ell+\nu'_r \eq \nu'\;,$$ as required.
%
% $=\{(x'_1,m'_1),\ldots,(x'_k,m'_k)\}$. Since $\bar{v}$ leaves no
% mass in $(a,b)$, we have $x'_j\not\in (a,b)$ for \mbox{$1\le j\le
% k$}. Suppose $x_1<\cdots<x_i\le a$ and $b\le x_{i+1}<\cdots<x_k$.
% Let $\nu'_\ell =\{(x'_1,m'_1),\ldots,(x'_i,m'_i)\}$ and
% $\nu'_r=\{(x'_{i+1},m'_{i+1}),\ldots,(x'_k,m'_k)\}$. Let
% $\bar{m}_\ell=\M_0[\nu'_L]$, $\bar{m}_r=\M_0[\nu'_r]$,
% $\bar{x}_\ell=\C[\nu'_\ell]=\M_1[\nu'_\ell]/\M_0[\nu'_\ell]$ and
% $\bar{x}_r=\C[\nu'_r]=\M_1[\nu'_r]/\M_0[\nu'_r]$.
%
\end{proof}

Using induction we easily obtain:

\begin{theorem}\label{T-splits} If $V$ is a sequence of moves that can be
applied to~$\mu$, then $V \mu\preceq \bar{V}\mu$.
\end{theorem}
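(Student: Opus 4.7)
The plan is a straightforward induction on the length $\ell$ of the move sequence $V = \langle v_1, v_2, \ldots, v_\ell\rangle$, with Lemma~\ref{L-splitmove} doing essentially all the work.

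For the base case $\ell = 0$, both $V\mu$ and $\bar{V}\mu$ equal $\mu$, and $\mu \preceq \mu$ holds trivially (zero basic splits suffice by the definition of $\preceq$). For the inductive step, write $V' = \langle v_1, \ldots, v_{\ell-1}\rangle$ and let $\mu_{\ell-1} = V'\mu$ and $\mu'_{\ell-1} = \bar{V}'\mu$. Since $V$ is applicable to $\mu$, the sub-sequence $V'$ is also applicable, so the induction hypothesis gives $\mu_{\ell-1} \preceq \mu'_{\ell-1}$. Since $v_\ell$ can be applied to $\mu_{\ell-1}$ (again because $V$ is applicable to $\mu$), Lemma~\ref{L-splitmove} yields
$$V\mu \;=\; v_\ell \mu_{\ell-1} \;\preceq\; \bar{v}_\ell \mu'_{\ell-1} \;=\; \bar{V}\mu,$$
which is exactly what we want.

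The only subtle point to double-check is the applicability condition in the inductive step: Lemma~\ref{L-splitmove} requires that $v_\ell$ be applicable to the smaller distribution $\mu_{\ell-1}$, not to the larger $\mu'_{\ell-1}$. This is precisely the form in which the hypothesis is stated, so no extra work is needed. No genuine obstacle is anticipated; the proof is a one-line induction whose entire content has already been packaged into Lemma~\ref{L-splitmove}.
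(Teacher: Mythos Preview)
Your proof is correct and is exactly the approach the paper takes: the paper simply states ``Using induction we easily obtain'' Theorem~\ref{T-splits} from Lemma~\ref{L-splitmove}, and you have spelled out precisely that induction.
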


Combining Theorem~\ref{T-splits} and Lemma~\ref{L-mom} we get the
following immediate corollary.

\begin{corollary}\label{C-extreme} If $V$ is a sequence of moves that can be
applied to~$\mu$, then $\M_2[V\mu] \le \M_2[\bar{V}\mu]$.
\end{corollary}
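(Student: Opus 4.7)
The plan is very short since both ingredients are already in hand. First I would invoke Theorem~\ref{T-splits}, which tells us that applying the original sequence of moves produces a distribution that is dominated in the splitting order by the distribution produced by the corresponding sequence of extreme moves; that is, $V\mu \preceq \bar{V}\mu$.

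Then I would simply feed this splitting relation into Lemma~\ref{L-mom}, which says that $\preceq$ is monotone with respect to the second moment $\M_2$. Combining the two yields $\M_2[V\mu] \le \M_2[\bar{V}\mu]$, which is exactly the statement.

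There is no real obstacle here: all the work has been done in proving Theorem~\ref{T-splits} (which required the transitivity of $\preceq$, the decomposition lemma for extreme moves acting on sums, and the case analysis inside Lemma~\ref{L-splitmove}) and in proving Lemma~\ref{L-mom} (reducing to a single basic split and computing $\M_2$ about the original center of mass). The corollary itself is a two-line composition of these facts, with no further calculation required.
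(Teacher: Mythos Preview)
Your proposal is correct and matches the paper's own argument exactly: the paper states the corollary as an immediate consequence of combining Theorem~\ref{T-splits} ($V\mu \preceq \bar{V}\mu$) with Lemma~\ref{L-mom} (splitting increases $\M_2$), which is precisely the two-line composition you describe.
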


\subsection{Spread vs.\ second moment}

We now obtain our first bound for mass movement problems. The bound
relies heavily on Lemma~\ref{L-SM2} that relates the spread and
second moment of a distribution, on Lemma~\ref{L-extreme} that
relates differences in spread to differences in second moments,
%between
% a distribution and a distribution obtained from it by an extreme
% move,
and finally, on Corollary~\ref{C-extreme} that states that
converting moves to extreme moves can only increase the second
moment.

\begin{lemma}\label{L-initial} Any sequence of moves that transforms the distribution
$\mu=\{(0,1)\}$ into a distribution $\nu$ with $\nu\{|x|\ge \NN\}\ge
p$, where $\NN>0$ and $0<p<1$, must contain at least
$(3p)^{3/2}\NN^3$ moves.
\end{lemma}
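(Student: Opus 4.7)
The plan is to pass from the given move sequence $V$ to its extreme-move counterpart $\bar V$, since by Corollary~\ref{C-extreme} this can only increase the second moment of the final distribution; then bound the number of moves by playing Lemma~\ref{L-extreme} against Lemma~\ref{L-SM2}.

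First I would observe that moves preserve $\M_0$ and $\M_1$, so the final distribution $\nu$ satisfies $\M_0[\nu]=1$ and $\M_1[\nu]=0$. The tail assumption $\nu\{|x|\ge \NN\}\ge p$ then gives the crude lower bound $\M_2[\nu]\ge p\NN^2$. Replacing $V$ by the corresponding extreme-move sequence $\bar V$, set $\mu_i=\bar v_i\cdots\bar v_1\mu_0$ and $M=\M_2[\mu_\ell]$. By Corollary~\ref{C-extreme}, $M\ge \M_2[\nu]\ge p\NN^2$. Because $\mu_{i-1}\preceq \mu_i$ under extreme moves (Lemma~\ref{L-simple2}(ii)), Lemma~\ref{L-mom} ensures the increments $\Delta_i:=\M_2[\mu_i]-\M_2[\mu_{i-1}]\ge 0$, and they telescope to $\sum_{i=1}^\ell \Delta_i = M$.

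Next I would apply Lemma~\ref{L-extreme} termwise and sum, getting
\[
S[\mu_\ell] \;=\; \sum_{i=1}^\ell \bigl(S[\mu_i]-S[\mu_{i-1}]\bigr) \;\ge\; 3\sum_{i=1}^\ell \Delta_i^2 \;\ge\; \frac{3M^2}{\ell},
\]
where the last step is Cauchy--Schwarz (or the power-mean inequality) applied to $\Delta_1,\ldots,\Delta_\ell$ with fixed sum $M$. On the other hand, Lemma~\ref{L-SM2} gives an upper bound on spread in terms of second moment:
\[
S[\mu_\ell]^2 \;\le\; \tfrac13\,\M_2[\mu_\ell]\,\M_0[\mu_\ell]^3 \;=\; \tfrac{M}{3}.
\]

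Combining the two inequalities yields $\bigl(3M^2/\ell\bigr)^2 \le M/3$, i.e.\ $\ell^2 \ge 27\,M^3$, so $\ell \ge \sqrt{27}\,M^{3/2}$. Substituting $M\ge p\NN^2$ gives $\ell \ge \sqrt{27}\,(p\NN^2)^{3/2} = (3p)^{3/2}\NN^3$, as required. The only subtle point — and the step I would take most care with — is justifying the reduction to extreme moves: one must check that both the tail condition's consequence for $\M_2$ and the termwise spread/second-moment inequality of Lemma~\ref{L-extreme} remain available after the replacement. The first is handled by Corollary~\ref{C-extreme} (which bounds $\M_2$ of the original final distribution by that of the extreme-move version), and the second is the very statement of Lemma~\ref{L-extreme}, so no further work is needed. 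The remainder is just the clean Cauchy--Schwarz manipulation above.
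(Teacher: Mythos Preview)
Your proof is correct and follows essentially the same approach as the paper's: pass to extreme moves via Corollary~\ref{C-extreme}, use Lemma~\ref{L-extreme} termwise together with Cauchy--Schwarz to bound $S[\mu_\ell]$ from below by $3M^2/\ell$, and use Lemma~\ref{L-SM2} to bound it from above, yielding $\ell\ge 3^{3/2}M^{3/2}\ge (3p)^{3/2}\NN^3$. The only difference is cosmetic --- the paper combines the two bounds by computing $\M_2[\bar\mu_\ell]^2/S[\bar\mu_\ell]$ directly rather than squaring, but the ingredients and logic are identical.
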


\begin{proof}
Let $\mu_0,\mu_1,\ldots,\mu_\ell$ be the sequence of
distributions obtained by applying a sequence~$V$ of~$\ell$ moves to
$\mu_0=\{(0,1)\}$, and suppose that $\mu_\ell\{|x|\ge \NN\}\ge p$.
By the definition of the second moment we have $\M_2[\mu_\ell]\ge
p\NN^2$.

Let $\bar{\mu}_0,\bar{\mu}_1,\ldots,\bar{\mu}_\ell$ be
the sequence of distributions obtained by applying the sequence
$\bar{V}$ of the extreme moves corresponding to the moves of~$V$
on $\bar{\mu}_0=\mu_0=\{(0,1)\}$.
%By Theorem~\ref{T-splits} we
%get that $\mu_\ell\preceq \bar{\mu}_\ell$, and hence by
%Lemma~\ref{L-mom},
By Corollary~\ref{C-extreme}, we get that
$$\M_2[\bar{\mu}_\ell]\;\ge\;\M_2[\mu_\ell]\;\ge\; p\,\NN^2\;.$$
By Lemma~\ref{L-SM2} we have
$$\frac{\M_2[\bar{\mu}_\ell]^2}{S[\bar{\mu}_\ell]}
 \;=\; \left(\frac{\M_2[\bar{\mu}_\ell] \M_0[\bar{\mu}_\ell]^3}{S[\bar{\mu}_\ell]^2}\right)^{1/2}%\frac12
  \M_2[\bar{\mu}_\ell]^{3/2}
 \;\ge\; {\sqrt{3}}\,\M_2[\bar{\mu}_\ell]^{3/2} \;\ge\; {\sqrt{3}}p^{3/2}\NN^3\;.$$

Let~$h_i=\M_2[\bar{\mu}_i]-\M_2[\bar{\mu}_{i-1}]$, for $1\le i\le
\ell$. As $\M_2[\bar{\mu}_0]=0$, we clearly have,
$$\M_2[\bar{\mu}_\ell]\;=\;\sum_{i=1}^\ell h_i\;.$$
By Lemma~\ref{L-extreme}, we get that
$$S[\bar{\mu}_\ell]\;\ge\; {3}\sum_{i=1}^\ell h_i^2\;.$$
Using the Cauchy-Schwartz inequality to justify the second
inequality below, we get:
$$
S[\bar{\mu}_\ell] \;\ge\; {3}\sum_{i=1}^\ell h_i^2 \; \ge\;
{3}\frac{(\sum_{i=1}^\ell h_i)^2}{\ell} \;=\;
{3}\frac{\M_2[\bar{\mu}_\ell]^2}{\ell}\;.$$
% \;\ge\; {3}\frac{p^2A^4}{\ell}\;. $$
Thus, as claimed,
$$\ell\;\ge\; 3\frac{\M_2[\bar{\mu}_\ell]^2}{S[\bar{\mu}_\ell]} \;\ge\; (3p)^{3/2}\NN^3\;.
\vspace{-2ex}$$
\end{proof}

\subsection{Mirroring}
The main result of this section is:

\begin{theorem}\label{T-asym}
Let $\mu_0,\mu_1,\ldots,\mu_\ell$ be a sequence of distributions
obtained by applying a sequence of moves to an initial distribution
$\mu_0$ with $\mu_0\{x>r\}=0$. If $\mm{x>r}\le m$ and $\mm{x\ge
r+d}\ge pm$, where $d>1$ and $0<p<1$, then the sequence of
moves must contain at least $\sqrt{3}p^{3/2}(d-\frac{1}{2})^3$ moves
whose centers are in $(r+\frac{1}{2},\infty)$.
\end{theorem}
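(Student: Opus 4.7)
The plan is to reduce Theorem~\ref{T-asym} to Lemma~\ref{L-initial} via a \emph{mirroring} construction about the pivot $c := r + \tfrac{1}{2}$. By translation assume $r = 0$, so $c = \tfrac{1}{2}$; let $k$ denote the number of moves with centers in $(c, \infty)$. We may truncate at the first time $t^*$ for which $\mu_{t^*}\{x \ge d\} \ge pm$ and bound $k$ for this prefix.

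For each move $v_j = ([a_j, b_j], \delta_j)$, let $v_j^M := ([2c - b_j,\, 2c - a_j],\, \delta_j^M)$ be its reflection across $c$, with $\delta_j^M(x) = \delta_j(2c - x)$; similarly define $\eta^M$ for any distribution $\eta$. Apply the doubled move sequence $\langle v_1, v_1^M, v_2, v_2^M, \ldots\rangle$ to the symmetrized initial distribution $\widetilde\mu_0 := \mu_0 + \mu_0^M$. By linearity of the action, $\widetilde\mu_i = \mu_i + \mu_i^M$ remains symmetric about $c$ for every $i$; the intermediate distribution $\mu_i + \mu_{i-1}^M$ is also non-negative, so every move in the doubled sequence is applicable. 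In particular, by symmetry, $\widetilde\mu_{t^*}\{|x - c| \ge d - \tfrac{1}{2}\} \ge 2pm$.

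Now adapt the proof of Lemma~\ref{L-initial} to the doubled sequence, using second moments about $c$. By the translation-invariant form of Corollary~\ref{C-extreme}, passing to extreme moves only increases $M_2[\cdot, c]$, so the extreme-move counterpart at time $t^*$ satisfies $M_2[\cdot, c] \ge 2pm(d - \tfrac{1}{2})^2$. Combining Lemmas~\ref{L-SM2} and~\ref{L-extreme} with Cauchy--Schwarz, exactly as in Lemma~\ref{L-initial}'s proof, yields a lower bound on the number of moves in the doubled sequence that contribute to the $M_2[\cdot, c]$ growth. The intended identification is that the $2k$ mirror-paired right moves account for the dominant reach of $M_2[\cdot, c]$, so the bound translates into $k \ge \sqrt{3}\,p^{3/2}(d - \tfrac{1}{2})^3$.

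The main obstacle is isolating $k$ from the total $2\ell$ moves of the doubled sequence and handling the nontrivial initial $M_2[\widetilde\mu_0, c] \ge M_0[\mu_0]/2$ (unlike the clean $M_2[\mu_0] = 0$ of Lemma~\ref{L-initial}). Contributions from non-right moves and their mirrors, each centered within distance $\tfrac{1}{2}$ of $c$, can increase $M_2[\cdot, c]$ only by an amount proportional to the (bounded) mass in their short-range intervals, so they should be absorbable as lower-order terms; making this separation precise, and tracking the \emph{net} second-moment growth $M_2[\cdot, c] - M_2[\widetilde\mu_0, c]$ rather than the raw final value, is where the specific constant $\sqrt{3}$ emerges.
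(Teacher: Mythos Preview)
Your mirroring idea is on the right track, but the plan as stated has a genuine gap that cannot be patched by ``lower-order term'' bookkeeping. The difficulty you flag in your last paragraph is not a technicality: without an additional idea, all three quantities you need to control---the initial second moment $M_2[\widetilde\mu_0,c]$, the total mass $M_0[\widetilde\mu_0]$, and the $M_2$-contribution of non-right moves---are \emph{unbounded} in terms of $m$, $p$, and $d$. The hypothesis $\mu_{\max}\{x>r\}\le m$ says nothing about $M_0[\mu_0]$ or about how $\mu_0$ sits inside $(-\infty,r]$, so $M_2[\widetilde\mu_0,c]$ can be arbitrarily large; and since Lemma~\ref{L-SM2} reads $S^2\le \tfrac13 M_2 M_0^3$, an uncontrolled $M_0$ already destroys the chain of inequalities you borrow from Lemma~\ref{L-initial}. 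Moreover, your claim that non-right moves are ``each centered within distance $\tfrac12$ of $c$'' is false: a non-right move can be centered anywhere in $(-\infty,c]$, act on an interval containing mass up to $M_0[\mu_0]$, and change $M_2[\cdot,c]$ by up to a quarter of that mass. There can also be arbitrarily many such moves.

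The paper closes this gap with a \emph{negative truncation} step that you are missing: before mirroring, collapse all mass in $(-\infty,r]$ to the single point $r$ (equivalently, to $-\tfrac12$ after the shift/normalization). This forces the total mass to be exactly $m$ at every step (since $\mu_{\max}\{x>r\}\le m$), makes the initial distribution a point mass, and kills every move acting entirely to the left of $r$. Moves straddling $r$ become ``semi-moves'' that no longer preserve center of mass, but after mirroring each semi-move pairs with its reflection to form one valid move centered at $c$; a simple alternation argument shows semi-moves are at most half the truncated sequence, yielding the factor $3$ (so $3\ell^+$ total moves) and the clean reduction to Lemma~\ref{L-initial} with $\mu_0=\{(0,1)\}$. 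Without this truncation, your symmetrization produces a problem that is not an instance of Lemma~\ref{L-initial} at all.
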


The theorem follows immediately from the following lemma by shifting
coordinates and renormalizing masses.

\begin{lemma}\label{L-max-g}
Let $\mu_0,\mu_1,\ldots,\mu_\ell$ be a sequence of distributions
obtained by applying a sequence of moves to an initial distribution
$\mu_0$ with $\mu_0\{x>-\frac{1}{2}\}=0$. If $\MM{x>-\frac{1}{2}}\le
1$ and $\mm{x\ge \NN}\ge p$, where $\NN>\frac{1}{2}$ and $0<p<1$,
then the sequence of moves must contain at least
$\sqrt{3}p^{3/2}\NN^3$ moves whose centers are at strictly positive
positions.
\end{lemma}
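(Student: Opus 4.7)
The plan is to reduce Lemma~\ref{L-max-g} to Lemma~\ref{L-initial} by a mirroring argument that symmetrizes the one-sided setup about $x=0$. For each distribution $\mu$, define its reflection $\hat\mu(x):=\mu(-x)$, and for each move $v$ its reflected move $\hat v$. Set $\sigma_j:=\mu_j+\hat\mu_j$. A direct induction shows that applying the interleaved sequence $\langle v_1,\hat v_1, v_2,\hat v_2,\ldots\rangle$ to the symmetric distribution $\sigma_0=\mu_0+\hat\mu_0$ produces exactly $\sigma_1,\sigma_2,\ldots$: the signed distribution of each $v_j$ adds to the $\mu$-part while the non-negativity of $\hat\mu$ keeps the sum a valid distribution, and symmetrically for $\hat v_j$. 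Each $\sigma_j$ is symmetric about $x=0$.

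At the critical time $i^*$ where $\mu_{i^*}\{x\ge \NN\}\ge p$, the sets $\{x\ge\NN\}$ and $\{x\le -\NN\}$ are disjoint (as $\NN>0$), so by symmetry $\sigma_{i^*}\{|x|\ge \NN\}\ge 2p$, giving $\M_2[\sigma_{i^*}]\ge 2p\NN^2$. I would then apply the chain from the proof of Lemma~\ref{L-initial} to the interleaved sequence on $\sigma_0$: pass to extreme moves via Corollary~\ref{C-extreme}, bound the spread using Lemma~\ref{L-SM2}, lower-bound the per-move spread increments by $3h_i^2$ via Lemma~\ref{L-extreme}, and apply Cauchy--Schwarz, producing a lower bound on the number of interleaved moves of the form $3\sqrt{3}\,\M_2[\sigma_{i^*}]^{3/2}/M_0[\sigma_0]^{3/2}$. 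To isolate the positive-centered moves of $V$, I would localize the second-moment/spread argument to the right half of $\sigma$: by symmetry the right-half $\M_2$-contribution of $\sigma_{i^*}$ is at least $p\NN^2$ while that of $\sigma_0$ lies entirely at $x\ge 1/2$, and only positive-centered moves of $V$ can move mass further rightward within this half (their mirrored counterparts contribute symmetrically on the left). This localization produces $\sqrt{3}\,p^{3/2}\NN^3$ as the lower bound on positive-centered moves of $V$.

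The main obstacle is that the hypothesis bounds only $\MM{x>-\tfrac{1}{2}}\le 1$ and does not bound the total mass $M_0[\mu_0]$, so $M_0[\sigma_0]=2M_0[\mu_0]$ could be large and the $1/M_0^{3/2}$ factor in the Lemma~\ref{L-initial}-style bound would degrade it. The key observation to overcome this is that mass of $\mu_0$ at positions $x\ll -\tfrac{1}{2}$ is effectively inert for reaching $\{x\ge \NN\}$: any such mass must at some point be in the bounded-capacity window $(-\tfrac{1}{2},\infty)$, whose content is at most $1$ throughout. Using Lemma~\ref{L-simple1}(iii) (a point mass at the centroid is dominated by the distribution) and Lemma~\ref{L-splitmove} (splitting interacts cleanly with moves), one may replace $\mu_0$ by an effective initial distribution of total mass at most $1$ concentrated near $x=-\tfrac{1}{2}$ without loss, ensuring $M_0[\sigma_0]\le 2$, after which the computation above delivers the desired bound.
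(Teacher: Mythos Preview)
Your high-level strategy---mirroring to reduce to the symmetric setup of Lemma~\ref{L-initial}---matches the paper's, but two essential technical steps are missing, and your proposed fixes do not work as stated.

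First, the mass-control step. You correctly identify that $M_0[\mu_0]$ may be arbitrarily large, but your suggestion to invoke Lemma~\ref{L-simple1}(iii) and Lemma~\ref{L-splitmove} to ``replace $\mu_0$ by an effective initial distribution of total mass at most~$1$'' does not go through: Lemma~\ref{L-simple1}(iii) yields a point mass of the \emph{same} total mass $M_0[\mu_0]$ (not~$1$) at the centroid $C[\mu_0]$ (which may lie far to the left, not at~$-\tfrac12$), and the moves of $V$ need not be applicable to any such replacement. The paper's device is different and more concrete: it \emph{truncates} each $\mu_i$ by collapsing all mass in $(-\infty,-\tfrac12)$ onto the point~$-\tfrac12$, so that every truncated distribution has total mass exactly~$1$ (this is precisely where the hypothesis $\mu_{\max}\{x>-\tfrac12\}\le 1$ enters). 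The price is that the transitions between successive truncated distributions need no longer be valid moves when the original move has center in $(-1,0]$: the truncation can shift the center of mass within $[-\tfrac12,\tfrac12]$, producing what the paper calls \emph{semi-moves}.

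Second, the move count. Your interleaved sequence has $2\ell$ moves, not $O(\ell^+)$, and the ``localize the second-moment/spread argument to the right half'' idea does not correspond to any lemma available: spread and second moment are global quantities, and Lemmas~\ref{L-SM2} and~\ref{L-extreme} do not restrict to half-lines. The paper instead exploits the truncation: moves with center $\le -1$ become null and are discarded; consecutive semi-moves are merged, so at most half the surviving steps are semi-moves (hence at most $\ell^+$ of them); and crucially, after mirroring, each semi-move combined with its reflection is symmetric about~$0$ and therefore \emph{is} a valid move on $[-\tfrac12,\tfrac12]$. One thus obtains a genuine move sequence of length at most $3\ell^+$ starting from $\{(0,2)\}$, to which Lemma~\ref{L-initial} applies directly to give $3\ell^+\ge (3p)^{3/2}\NN^3$. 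Your sketch has the right skeleton but is missing exactly this truncation-and-semi-move machinery.
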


\begin{proof} We may assume, without loss of generality, that the
first move in the sequence moves some mass from $(-\infty,\frac12]$
into $(\frac{1}{2},\infty)$ and that the last move moves some mass
from $(-\infty,d)$ to $[d,\infty)$. Hence, the center of the first
move must be in $(-1,0]$ and the center of the last move must be at
a positive position.

We shall show how to transform the sequence of distributions
$\mu_0,\mu_1,\ldots,\mu_\ell$ into a sequence of distributions
$\mu'_0,\mu'_1,\ldots,\mu'_{\ell'}$, obtained by applying a sequence
of~$\ell'$ moves, such that $\mu'_0=\{(0,1)\}$,
$\mu'_{\ell'}\{|x|\ge d\}\ge p$, and such that the number of
moves~$\ell'$ in the new sequence is at most three times the
number~$\ell^+$ of positively centered move in the original
sequence. The claim of the lemma would then follow immediately from
Lemma~\ref{L-initial}.

The first transformation is ``negative truncation'', where in each
distribution $\mu_i$, we shift mass from the interval
$(-\infty,-\frac{1}{2})$ to the point $-\frac{1}{2}$. Formally the
resulting distribution $\fvec{\mu}_i$ is defined by
\[\fvec{\mu}_i(x)\; =\; \left\{\begin{array}{ll}
\mu_i(x)&\mbox{ if }x>-\frac{1}{2}\\
1-\mu_i\{x> -\frac{1}{2}\}&\mbox{ if }x=-\frac{1}{2}\\
0&\mbox{ if }x<-\frac{1}{2}
\end{array}\right.  .
\]
Note that the total mass of each distribution is $1$ and that
$\fvec{\mu}_0=\{(-\frac{1}{2},1)\}$. Let $\delta_i=\mu_i-\mu_{i-1}$
be the signed distribution associated with the move that transforms
$\mu_{i-1}$ into $\mu_i$ and let $[c_i-\frac12,c_i+\frac12]$ be the
interval in which it operates. For brevity, we refer to $\delta_i$
as the move itself, with its center $c_i$ clear from the context. We
now compare the transformed ``moves''
$\fvec{\delta}_i=\fvec{\mu}_i-\fvec{\mu}_{i-1}$ with the original
moves $\delta_i=\mu_i-\mu_{i-1}$. If $c_i>0$, then $\delta_i$ acts
above $-\frac{1}{2}$ and $\fvec{\delta}_i=\delta_i$. If $c_i\le -1$,
then $\delta_i$ acts at or below $-\frac{1}{2}$, so
$\fvec{\delta}_i$ is null and $\fvec{\mu}_i=\fvec{\mu}_{i-1}$. In
the transformed sequence, we skip all such null moves. The remaining
case is when the center~$c_i$ of~$\delta_i$ is in $(-1,0]$. In this
case~$\fvec{\delta}_i$ acts within $[-\frac{1}{2},\frac{1}{2}]$, and
we view it as centered at~$0$. However, typically~$\fvec{\delta}_i$
does not define a valid move as it may change the center of gravity.
We call these~$\fvec{\delta}_i$ \emph{semi-moves}. If we have two
consecutive semi-moves $\fvec{\delta}_i$ and $\fvec{\delta}_{i+1}$,
we combine them into a single semi-move
$\fvec{\delta}_i+\fvec{\delta}_{i+1}$, taking~$\fvec{\mu}_{i-1}$
directly to~$\fvec{\mu}_{i+1}$. In the resulting negatively
truncated and simplified sequence, we know that at least every
alternate move is an original, positively centered, move. Since the
last move in the original sequence was positively centered we
conclude:
\begin{claim}\label{C:neg-truncation}
The sequence obtained by the negative truncation transformation and
the subsequent clean-up is composed of original positively centered
moves and semi-moves (acting within $[-\frac{1}{2},\frac{1}{2}]$).
The sequence begins with a semi-move and at most half of its
elements are semi-moves.
% After the negative truncation and subsequent cleaning, we get a
% sequence of original positively centered moves and semi-moves
% (acting within $[-\frac{1}{2},\frac{1}{2}]$). The sequence begins
% with a semi-move and at most half of its elements are semi-moves.
\end{claim}
Next, we create a reflected copy of the negatively truncated
distributions. The reflected copy $\bvec{\mu}_i$ of $\fvec{\mu}_i$
is defined by
\[ \bvec{\mu}_i(x)\;=\;\fvec{\mu}_i(-x),\quad \mbox{for every $x\in\reals$}.\]
We similarly define the reflected (semi-)moves
$\bvec{\delta}_i=\bvec{\mu}_i-\bvec{\mu}_{i-1}$. We can now define
the mirrored distributions
$$\bfvec{\mu}_i \eq \fvec{\mu}_i+\bvec{\mu}_i\;.$$
Note that
$\bfvec{\mu}_0=\fvec{\mu}_0+\bvec{\mu}_0=\{(-\frac{1}{2},1),(\frac{1}{2},1)\}$.
The distribution $\bfvec{\mu}_i$ may be obtained from
$\bfvec{\mu}_{i-1}$ by applying the (semi-)move $\fvec{\delta}_i$,
resulting in the distribution $\fvec{\mu}_i+\bvec{\mu}_{i-1}$, and
then the (semi-)move $\bvec{\delta}_i$, resulting in
$\fvec{\mu}_i+\bvec{\mu}_{i}=\bfvec{\mu}_i$. The $\bfvec{\mu}_i$
sequence is therefore obtained by interleaving the (semi-)moves
$\fvec{\delta}_i$ with their reflections $\bvec{\delta}_i$. Now
comes a key observation:
\begin{claim} If $\fvec{\delta}_i$ and $\bvec{\delta}_i$ are semi-moves, then
their sum $\bfvec{\delta}_i=\fvec{\delta}_i+\bvec{\delta}_i$ defines
an ordinary move centered at~$0$ and acting on
$[-\frac{1}{2},\frac{1}{2}]$.
\end{claim}
\begin{proof} Both $\fvec{\delta}_i$ and $\bvec{\delta}_i$ preserve the total mass.
As $\bfvec{\delta}_i$ is symmetric about $0$, it cannot change the
center of mass.
\end{proof}

As suggested by the above observation, if $\fvec{\delta}_i$ and
$\bvec{\delta}_i$ are semi-moves, we combine them into a single
ordinary move $\bfvec{\delta}_i$ centered at~$0$.
%
%
% Recall, from
%Claim~\ref{C:neg-truncation}, that the sequence obtained by the
%negative truncation transformation starts with a semi-move and that
%at least half of its elements correspond to original, positively
%centered, moves. It follows immediately that
%
%
We thus obtain a sequence of at most $3\ell^+$ moves, where $\ell^+$
is the number of positively centered moves in the original sequence,
that transforms $\bfvec{\mu}_0$ to $\bfvec{\mu}_\ell$.

Recall from Claim~\ref{C:neg-truncation} that the first ``move''
$\fvec{\delta}_1$ in the negatively truncated sequence is a
semi-move. The first move $\bfvec{\delta}_1$, obtained by combining
$\fvec{\delta}_1$ and $\bvec{\delta}_1$, is therefore a move acting
on $[-\frac12,\frac12]$. We now replace the initial distribution
$\bfvec{\mu}_0=\{(-\frac{1}{2},1),(\frac{1}{2},1)\}$ by the
distribution $\mu'_0=\{(0,2)\}$, which has the same center of
gravity, and replace the first move by
$\delta'_1=\bfvec{\delta}_1+\{(-\frac{1}{2},1),(0,-2),(\frac{1}{2},1)\}$.
The distribution after the first move is then again $\bfvec{\mu}_1$.

We have thus obtained a sequence of at most $3\ell^+$ moves
that transforms $\mu'_0=\{(0,2)\}$ into a distribution
$\nu'=\bfvec{\mu}_\ell$ with $\nu'\{|x|\ge \NN\}\ge 2p$. Scaling
these distribution and moves by a factor of~2, we get, by
Lemma~\ref{L-initial}, that $3\ell^+\ge (3p)^{3/2}\NN^3$, as
claimed.
\end{proof}

\subsection{Proofs of Theorems~\ref{T-m1} and~\ref{T-m2}}

We prove the following theorem which easily implies
Theorem~\ref{T-m1}.

\begin{theorem}\label{T-gen} Let $\mu_0,\mu_1,\ldots,\mu_\ell$ be a sequence of
distributions obtained by applying a constrained sequence of moves
on an initial distribution $\mu_0$ with $\mu_0\{x>r \}=0$. If
$\mm{x>r}\leq n$, where $n\geq \frac15$, then $\MM{x> r + 6
n^{1/3}-1}=0$.
\end{theorem}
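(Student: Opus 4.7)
After translating coordinates so that $r=0$, I set $\phi(a):=\mu_{\max}\{x>a\}$, a non-increasing function satisfying $\phi(0)\le n$, and write $D=6n^{1/3}-1$. Suppose for contradiction that $\phi(D)>0$. The plan is to chain Theorem~\ref{T-asym} against the weight-constrained hypothesis so as to force $\phi(0)$ strictly above $n$, yielding a contradiction.

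The workhorse is a ``decay inequality'': applying Theorem~\ref{T-asym} at $r'=a$ with $m=\phi(a)$ and $p=\phi(a+d)/\phi(a)$ forces at least $\sqrt{3}p^{3/2}(d-1/2)^3$ moves to be centered in $(a+1/2,\infty)$, while the weight-constrained hypothesis bounds this count above by $\phi(a+1/2)\le\phi(a)$. After rearranging, this yields, for every $a\ge 0$ and $d>1$ with $\phi(a+d)>0$,
\[
\phi(a)\;\ge\;3^{1/5}(d-1/2)^{6/5}\phi(a+d)^{3/5}.
\]

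I then iterate this inequality along a chain $0=a_0<a_1<\dots<a_K=D$ whose gaps $g_i:=a_i-a_{i-1}-1/2$ all exceed $1/2$. Telescoping yields
\[
\phi(0)\;\ge\;3^{(1-(3/5)^K)/2}\,\phi(D)^{(3/5)^K}\prod_{i=1}^K g_i^{(6/5)(3/5)^{i-1}}.
\]
A Lagrange-multiplier calculation shows the product is maximized, subject to $\sum g_i=D-K/2$, when the $g_i$ form a geometric progression with ratio $3/5$. Taking $K$ as large as the feasibility constraint $g_i>1/2$ allows (which grows like $\log n$), the coefficient of $\phi(D)^{(3/5)^K}$ scales as a constant times $D^3\sim 216\,n$, strictly exceeding $n$; the hypothesis $n\ge\frac15$ guarantees at least one feasible iteration exists.

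The main obstacle is to convert this ``coefficient exceeds $n$'' bound into a strict contradiction for all positive $\phi(D)$, since the factor $\phi(D)^{(3/5)^K}$ tends to $1$ only in the limit $K\to\infty$. I expect this is handled by pushing $K$ to its feasibility limit, so that $(3/5)^K$ becomes of order $n^{-1/3}$ and $\phi(D)^{(3/5)^K}$ is forced close enough to $1$ for the product lower bound to strictly exceed $n$; combined with the hypothesis $n\ge\frac15$ handling the small-$n$ regime, this contradicts $\phi(0)\le n$ and forces $\phi(D)=0$.
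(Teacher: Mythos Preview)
Your decay inequality $\phi(a)\ge 3^{1/5}(d-\tfrac12)^{6/5}\phi(a+d)^{3/5}$ is correct and is exactly the engine behind the paper's argument. The genuine gap is precisely where you flag it, and your proposed resolution does not work: with $K$ pushed to the feasibility limit you indeed get $(3/5)^K$ of order $n^{-1/3}$, but nothing in the hypotheses bounds $\phi(D)$ from below. The masses in the $\mu_i$ are arbitrary positive reals, so $\phi(D)$ may be, for instance, $e^{-n}$, in which case $\phi(D)^{(3/5)^K}\approx e^{-n^{2/3}}$ swamps any polynomial-in-$n$ coefficient. No choice of the $a_i$ based on distance alone can control this factor.

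The missing idea is the \emph{integrality} of the weight constraint: if $\phi(a)<1$ then there are zero moves centered in $(a,\infty)$, so no move touches $(a+\tfrac12,\infty)$ and hence $\phi(a+\tfrac12)=0$. The paper builds this into a short induction on~$n$: take $u$ to be the largest value with $\mu_{\max}\{x\ge r+u\}>n/5$, apply Theorem~\ref{T-asym} once with $p=\tfrac15$ to obtain $u\le (7n)^{1/3}+\tfrac12$, and recurse on $r'=r+u$ with $n'=n/5$; the recursion bottoms out cleanly when $n'<1$ via the integrality observation. In your framework the same observation reads $\phi(D)>0\Rightarrow\phi(D-\tfrac12)\ge 1$; if you telescope to $a_K=D-\tfrac12$ rather than to $D$, the troublesome factor becomes $\phi(D-\tfrac12)^{(3/5)^K}\ge 1$ and your scheme can then be completed---though the numerical constants would still need to be checked carefully.
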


%UZ: Changed 1/5 to \frac15 etc.

\begin{proof}
The proof is by induction on~$n$. If $n<1$ then there is no move
with center greater than $r$ and hence $\MM{x> r + \frac12}=0$.
Since $6(\frac15)^{1/3}-1>1/2$, the result clearly holds.

Suppose now that $\MM{x>r} = n$ and that the result holds for all
$\frac15\leq n' < n$. Let $u$ be the largest number for which
$\MM{x\ge r+u} > \frac{n}{5}$. As the distributions $\mu_i$ are
discrete, it follows that $\MM{x > r+u}\leq \frac{n}{5}$. As $u\ge
0$, we have $\mu_0\{x> r+u\}=0$. By the induction hypothesis with
$r$ replaced by $r+u$, we therefore get that
$$\MM{x > r + u+6(\frac{n}{5})^{1/3} -1}\;=\;0\;.$$

As $\mu_0\{x>r\}=0$, $\mm{x>r}\le n$ and $\MM{x\ge r+u} >
\frac{n}{5}$, we get by Theorem~\ref{T-asym}
%MTsoda before the reference was to the lemma 5.12 instead of Theorem 5.11
that the sequence must
contain at least $\sqrt{3}(\frac15)^{3/2} (u-\frac{1}{2})^3 >
\frac17 (u-\frac{1}{2})^3$ moves whose centers are positive. As the
sequence of moves is constrained, and as $\mm{x>r}\le n$, there can
be at most~$n$ such moves with centers greater than $r$, i.e.,
$$\frac17 \left(u-\frac{1}{2}\right)^3 \;\le\; n\;.$$
Hence
$$u\leq (7n)^{1/3} + \frac{1}{2}\;,$$
and so
$$u+6(\frac{n}{5})^{1/3} - 1
  \;\le\; ( 7^{1/3} + 6\cdot (\frac15)^{1/3} ) n^{1/3} - \frac12
\;<\; 5.5 n^{1/3} - \frac12\;\leq\; 6 n^{1/3} - 1\;,$$ for $n\geq
1$.

This proves the induction step and completes the proof.
\end{proof}

Modulo the proofs of Lemmas~\ref{L-SM2} and~\ref{L-extreme}, which
are given in the next section, this completes the proof of our main
result that the maximum overhang that can be achieved using $n$
blocks in at most $6n^{1/3}$.
% The proof above was chosen for its simplicity.
It is fairly straightforward to modify the proof of
Theorem~\ref{T-gen} above so as to obtain the stronger conclusion
that\linebreak $\MM{x>cn^{1/3}-1}=0$, for any $c>\frac{5^{5/2}}{2\cdot
3^{5/3}}\simeq 4.479$, at least for large enough values of~$n$, and
hence an improved upper bound on overhang of, say, $4.5n^{1/3}$.
This is done by choosing~$u$ to be the largest number for which
$\MM{x\ge u} > \frac{27}{125}n$. (The constant $\frac{27}{125}$ here
is the optimal choice.) The proof, however, becomes slightly
messier, as several of the inequalities do not hold for small values
of~$n$.

Next, we prove the following theorem which easily implies
Theorem~\ref{T-m2}.

\begin{theorem}\label{T-weightless} Let $\mu_0,\mu_1,\ldots,\mu_n$ be a sequence of
distributions obtained by applying a sequence of $n$ moves to an
initial distribution $\mu_0$ with $\mu_0\{x\le 0\}\le n$ and
$\mu_0\{x>0\}=0$, where $n\geq 2$. Then
$\mu_n\{x>2n^{1/3}\log_2n\}<1$.
\end{theorem}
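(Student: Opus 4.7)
The plan is to assume for contradiction that $\mu_n\{x>L\}\ge 1$ with $L:=2n^{1/3}\log_2 n$, and then apply Theorem~\ref{T-asym} along a dyadic cascade of thresholds. Without a weight-constraint to shrink the available mass geometrically (as in the proof of Theorem~\ref{T-gen}), a single use of Theorem~\ref{T-asym} is too weak --- it would give only an $O(n^{5/6})$ bound from the budget of $n$ moves and the starting mass $n$. Instead we pay a factor of roughly $\log_2 n$ by repeatedly halving the ``surviving mass to the right'' over $O(\log n)$ stages.

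Concretely, I set $v_0:=0$ and define recursively $m_j:=\mu_{\max}\{x>v_j\}$ and $v_{j+1}:=\inf\{a\ge v_j:\mu_{\max}\{x>a\}\le m_j/2\}$. The map $a\mapsto\mu_{\max}\{x>a\}$ is the pointwise maximum of finitely many right-continuous non-increasing step functions, hence itself right-continuous, which gives $m_{j+1}\le m_j/2$ automatically; a pigeon-hole/limit argument across the $n+1$ time indices shows that also $\mu_{\max}\{x\ge v_{j+1}\}\ge m_j/2$. Starting from $m_0\le\mu_0\{x\le 0\}\le n$, the halving reaches some $J\le\lfloor\log_2 n\rfloor+1$ with $m_J<1$, and the contradiction hypothesis forces $L<v_J$. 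At each level $j$ I apply Theorem~\ref{T-asym} with $r=v_j\ge 0$ (so $\mu_0\{x>r\}=0$), $m=m_j$, $p=\tfrac12$, and $d=v_{j+1}-v_j$: when $d>1$ this gives at least $\sqrt{3/8}\,(d-\tfrac12)^3$ moves centered in $(v_j+\tfrac12,\infty)$, so bounding by the total budget of $n$ moves yields $v_{j+1}-v_j\le(8/3)^{1/6}n^{1/3}+\tfrac12<1.18\,n^{1/3}+\tfrac12$ (the same bound holds trivially when $d\le 1$). Summing over $j$,
\[
L\;<\;v_J\;\le\;J\bigl(1.18\,n^{1/3}+\tfrac12\bigr)\;\le\;(\lfloor\log_2 n\rfloor+1)\bigl(1.18\,n^{1/3}+\tfrac12\bigr),
\]
and a direct check shows this right-hand side is strictly less than $L=2n^{1/3}\log_2 n$ for all $n\ge 5$.

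The main obstacle, beyond the careful right-continuity and pigeon-hole bookkeeping needed to extract $\mu_{\max}\{x\ge v_{j+1}\}\ge m_j/2$, is the handful of small $n$ that the dyadic calculation misses, namely $n\in\{2,3,4\}$. I would dispose of these with a trivial geometric observation: every nontrivial move's length-$1$ support must intersect the current support of the distribution, so the rightmost occupied position advances by at most $1$ per move; starting from $\mu_0\{x>0\}=0$, no mass lies beyond $x=n$ after $n$ moves. Since $n<2n^{1/3}\log_2 n$ for $n\in\{2,3,4\}$, this immediately gives $\mu_n\{x>L\}=0<1$, completing the proof.
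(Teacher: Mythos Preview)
Your approach is correct and follows essentially the same dyadic-threshold strategy as the paper, though your execution is more elaborate. The paper also sets $u_0=0$ and (for $2^k\le n<2^{k+1}$) defines $u_i$ as the \emph{largest} number with $\mu_{\max}\{x\ge u_i\}\ge n/2^i$, exploiting discreteness to avoid your infimum/right-continuity bookkeeping. The main simplification is that, rather than bounding every gap $u_i-u_{i-1}$ via Theorem~\ref{T-asym} and summing, the paper observes by pigeonhole that if $u_k\ge 2kn^{1/3}$ then some single gap satisfies $u_i-u_{i-1}\ge 2n^{1/3}$, and one application of Theorem~\ref{T-asym} at that level already yields more than $n$ moves. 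This single-shot use of the theorem makes the arithmetic work uniformly for all $n\ge 2$, so the paper needs neither your summation nor your separate treatment of $n\in\{2,3,4\}$.
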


\begin{proof} Suppose that $2^k\le n<2^{k+1}$, where $k\ge 0$. For
$1\le i\le k$, let $u_i$ be the largest number for which $\mm{x\ge
u_i}\ge \frac{n}{2^i}$. By the discreteness of the distributions we
again have $\mm{x> u_i}< \frac{n}{2^i}$. Let $u_0=0$. Assume, for
the sake of contradiction, that $\mu_n\{x>2n^{1/3}\log_2n\}\ge 1$.
Then, $u_k\ge 2kn^{1/3}$. There is then at least one value of~$i$
for which $u_i-u_{i-1}\ge 2n^{1/3}$. By Theorem~\ref{T-asym},
applied with $r=u_{i-1}$ and $d=u_i-u_{i-1}$, we conclude that the
sequence must contain more than~$n$ moves, a contradiction.
\end{proof}

As before, the constants in the above proof are not optimized. We
believe that a stronger version of the theorem, which states under
the same conditions that $\mu_n\{x>cn^{1/3}(\log_2 n)^{2/3}\}<1$,
for some $c>0$, actually holds. This would match an example supplied
by Johan H{\aa}stad. Theorem~\ref{T-weightless} (and
Theorem~\ref{T-m2}) imply an almost tight bound on an interesting
variant of the overhang problem that involves weightless blocks, as
discussed in Section~\ref{sec:concl}.

\subsection{Proof of spread vs. second moment inequalities}
\label{SS-proofs}

% \ignore{
% We begin with a simple technical lemma ({\em which we probably
% need somewhere - I'll leave out for now!)}.
% \begin{lemma}\label{lem:local}
% Consider the discrete distribution $\mu = \{(x_1,m_1),\ldots,(x_k,m_k)\}$
% where $x_1\leq \cdots\leq x_k$. For any $(a,b)$ with $a<b$, define
% $\bar{m}_\ell = \mu\{x<a\}$,\; $\bar{m}_c = \mu\{a\leq x\leq b\}$
% ,\; $\bar{m}_r = \mu\{x>b\}$, and $\bar{x}_\ell =
% C[\mu_{(-\infty,a)}]$,\; $\bar{x}_c = C[\mu_{[a,b]}]$,\; $\bar{x}_r
% = C[\mu_{(b,-\infty)}]$. Then
% $$S[\mu]\;=\; S[\mu_{(-\infty,a)}] + S[\mu_{[a,b]}] + S[\mu_{(b,-\infty)}]
% + S[\{(\bar{x}_\ell, \bar{m}_\ell) , (\bar{x}_c, \bar{m}_c) , (\bar{x}_r, \bar{m}_r)\}]\; .$$
% \end{lemma}

% \begin{proof}
% The proof is by a routine regrouping of the terms in the expansion of $S[\mu]$.
% \end{proof}
% }

\begin{lemma-againa} {\rm(The proof was deferred from Section~\ref{SS-dist}.)}\ \
For any discrete distribution $\mu$,
$$S[\mu]^2 \;\le\; \frac{1}{3}\M_2[\mu] \M_0[\mu]^3\;.$$
\end{lemma-againa}

The method of proof used here was suggested to us by
Benjy Weiss, and resulted in a much improved and simplified
presentation. The lemma is essentially the case $n=2$ of a
more general result proved by Plackett~\cite{P47}.

\begin{proof}
Suppose that $\mu = \{ (x_1,m_1),...,(x_k,m_k) \}$
where $x_1< x_2 <\cdots <  x_k$.

We first transform the coordinates into a form which will be more convenient for applying the Cauchy-Schwartz inequality.
Since the statement of the lemma is invariant
under scaling of the masses,
we may assume that $\M_0[\mu]=1$.

%UZ2: In many of the displayed equations below, I replaced =
% by \eq, which leaves more space, for clarity and conformity
% with other parts of the paper.

Define a function $g(t)$ for $-\frac12\leq t\leq \frac12$ by
$$g(t) \eq x_i\;,\quad \mbox{where}
\quad \sum_{r=1}^{i-1}m_r < t+\frac12 \;\leq\; \sum_{r=1}^i m_r
\;,$$ and define $g(-\frac12) =x_1$.
% Then g(t)=y$ implies $\mu\{x\leq y\}=t+1/2$.
%UZ2: I removed the last sentence as it is false.

Now we have that
$$M_j[\mu] \eq \sum_{i=1}^k x_i^j m_i \eq \tint{g(t)^j}$$
%UZ2: Added upper and lower bounds to the summation.
for $j\geq 0$, and
$$S[\mu] \eq \sum_{i<j} m_i m_j (x_j - x_i)
\eq \tint{\int_{s=-\frac12}^t \left(g(t)-g(s)\right)\;\d s} \;.$$
%MT: added a littel explanation..
Above it may seem that the  integral should have been restricted
to the case where $g(s)<g(t)$. However, if $g(t)=g(s)$, the integrand is
zero, so this case does not contribute to the value of the integral.

Since $S$ is invariant under translation
and $\M_2$ is minimized by a translation which
moves $C[\mu]$ to the origin, we may assume without
loss of generality that $C[\mu]=0$, i.e., $M_1[\mu]=\tint{g(t)} = 0$.

Therefore
$$\tint{\int_{s=-\frac12}^t g(t)\;\d s} \eq \tint{\left(t+\frac12\right)g(t)}
\eq \tint{t g(t)} \;,$$ while
$$\tint{\int_{s=-\frac12}^t g(s)\; \d s}
\eq \int_{s=-\frac12}^\frac12  \int_{t=s}^\frac12 g(s)\;\d t \;\d s
\eq \int_{s=-\frac12}^\frac12 \left(\frac12-s\right) g(s)\;\d s \eq
- \int_{s=-\frac12}^\frac12 s g(s)\;\d s  \;.$$ So
$$S[\mu] \eq 2\tint{t g(t)} \;. \eqno{(\dagger)}$$

Using the Cauchy-Schwartz inequality,
$$S[\mu]^2 \eq 4\left(\tint{t g(t)}\right)^2 \;\leq\; 4\tint{g(t)^2}\cdot\tint{t^2}
\eq 4M_2[\mu]\cdot\frac1{12} \eq \frac13 M_2[\mu] \;.\vspace{-6ex}$$
\end{proof}
\vspace{3ex}

\renewcommand{\tint}[1]{\int_{-\frac12}^\frac12 {#1}\; \d t}

\begin{lemma-againb} {\rm(The proof was deferred from Section~\ref{SS-moves}.)}\ \
If $\mu_1$ is obtained from $\mu_0$ by an
extreme move (in an interval of length~1) then
$$ S[\mu_1]-S[\mu_0] \;\ge\; 3(\M_2[\mu_1]-\M_2[\mu_0])^2 \;.$$
\end{lemma-againb}

\begin{proof}
Since the statement of the lemma is invariant under linear
translation of the coordinates, we may assume that the interval of
the move is $[-\frac12,\frac12]$. Let $\nu_0 =
({\mu_0})_{[-\frac12,\frac12]}$, i.e., the restriction of $\mu_0$ to
$[-\frac12,\frac12]$.
%UZ2: Replaced [a,b] by [-1/2,1/2].

Note that the lemma relates the difference in spread
and the difference in second moment resulting from the extreme
move. Since the addition of an extra point mass at either
$-\frac12$ or $\frac12$ leaves each of these differences invariant, we may
add such a mass as will bring the center of mass of $\nu_0$ to $0$,
and continue the proof under this assumption.
Since the statement of the lemma is invariant under scaling of the masses,
we may further assume that $M_0[\nu_0]=1$.

If $\nu_1$ is the result within the interval $[-\frac12,\frac12]$
of the extreme move, then:
$$\nu_1 \eq \left\{\left(-\frac12,\frac12\right), \left(\frac12,\frac12\right)\right\}
\mathrm{\quad and\quad }M_2[\nu_1] \eq S[\nu_1]\eq\frac14 \;.$$

We define $g(t)$ for $-\frac12\leq t\leq \frac12$ just as in the
proof of Lemma~\ref{L-SM2} but now corresponding to the
distribution~$\nu_0$,
% and so $g(-\frac12)=-\frac12$ and g(\frac12)=\frac12$.
and so $-\frac12 \le g(t) \le \frac12$, for $-\frac12\leq t\leq
\frac12$.
%UZ2: The previous statement was not necessarily correct.
As before, $M_j[\nu_0]=\tint{g(t)^j}$ for $j\geq 0$, and we recall
%UZ2: Replaced M_k by M_j for consistency.
as in~($\dagger$) that $S[\nu_0]= 2\tint{t g(t)}$.

We have $M_1[\nu_0]=\tint{g(t)}=0$. Let $c =
M_2[\nu_0]=\tint{g(t)^2}$ and $s = S[\nu_0]$.
By Lemma~\ref{L-SM2} we have $s^2\le \frac{c}{3}$. If $c\leq
\frac1{12}$ then $s\leq \sqrt{\frac{c}3}\leq \frac16$ and the result
follows immediately as
%UZ2: Rewording.
% If $c\leq \frac1{12}$ then the result follows from
% Lemma~\ref{L-SM2}, since
$$\textstyle S[\nu_1]-S[\nu_0] - 3(M_2[\nu_1]-M_2[\nu_0])^2 \eq
\frac14 - s - 3\left(\frac14 - c\right)^2 \;\geq\; \frac14 - s -
3\left(\frac14- 3s^2\right)^2 \eq \frac{(1+2s)(1-6s)^3}{16} \geq 0
\;.$$
% and $s\leq \sqrt{\frac{c}3}\leq \frac16$.

% If $c>\frac1{12}$, we show that the maximal value of $S$ is achieved
% by the function $h$ defined by:
We next claim that if $c=M_2[\nu_0]>\frac{1}{12}$, then
$s=S[\nu_0]\le \frac14 - \frac{a^2}{12}$, where $a = \frac32 - 6c
<1$. To prove this claim, we define a function $h(t)$ as follows:
%UZ2: I believe the new form is clearer and more accurate.
$$
h(t)\eq \left\{ \begin{array}{cl}
 \frac{t}{a} & \mbox{if $|t| \leq \frac{a}2$, and}\\
 \frac12 \sgn(t) & \mbox{otherwise.}
\end{array}\right.
$$
%where $a = \frac32 - 6c <1$.
We may verify that
$$\tint{h(t)^2} \eq \frac14-\frac{a}6 \eq c \quad\mathrm{and}\quad
  \tint{t\, h(t)} \eq \frac18 - \frac{a^2}{24} \;.$$

By the Cauchy-Schwartz inequality,
$$\left(\tint{h(t) g(t)}\right)^2 \;\leq\; \tint{h(t)^2}\cdot\tint{g(t)^2}
\eq c^2 ,$$ and so
$$\tint{h(t) g(t)} \;\leq\; c \eq \tint{h(t)^2} \;.\eqno{(*)}$$
We also have
$$\tint{\left(\frac{t}{a}-h(t)\right)g(t)}
\;\leq\; \tint{\left(\frac{t}{a}-h(t)\right)h(t)} \;,\eqno{(**)}$$
since $h(t)-g(t)\leq 0$ and $\frac{t}{a}-h(t) \leq 0$ for
$t<-\frac{a}2$, and $h(t)-g(t)\geq 0$ and $\frac{t}{a}-h(t) \geq 0$
for $t>\frac{a}2$, and $\frac{t}{a}-h(t) \eq 0$ for $|t|\leq
\frac{a}2$. Adding inequalities~(*) and~(**), and multiplying
by~$2a$, gives
% $\tint{t g(t)} \leq \tint{t h(t)}$.
$$S[\nu_0] \eq 2\tint{t\, g(t)} \;\leq\; 2\tint{t\, h(t)} \;=\; \frac14 - \frac{a^2}{12}\;.$$
Finally,
% $$S[\nu_1]-S[\nu_0] \eq \frac14 - 2\tint{t g(t)} \;\geq\; \frac14 - 2\tint{t h(t)}
% \eq \frac14 - 2\left( \frac18 - \frac{a^2}{24} \right)$$
% $$\quad \eq \frac{a^2}{12}
%  \eq 3\left( \frac14 - c \right)^2
%  \eq 3(\M_2[\nu_1]-\M_2[\nu_0])^2 \;.$$
$$ S[\nu_1]-S[\nu_0] \;\ge\; \frac14 - \left( \frac14 - \frac{a^2}{12}
\right) \eq  \frac{a^2}{12} \eq 3\left( \frac14 - c \right)^2 \eq
3(\M_2[\nu_1]-\M_2[\nu_0])^2 \;.$$
This completes the proof.
\end{proof}

%UZ2: New stuff.
We end the section by noting that although the inequalities of
Lemmas~\ref{L-SM2} and~\ref{L-extreme} are only claimed for discrete
distributions, which is all we need in this paper, our proofs can be
easily modified to show that they hold also for general continuous
distributions. In fact, for non-trivial discrete distributions, the
inequalities in the two lemmas are always \emph{strict}. In the
continuous case, the inequalities are satisfied with equality by
appropriately chosen uniform distributions. In particular, the
constant factors $\frac13$ and $3$ appearing in the two lemmas
cannot be improved.

\section{Concluding remarks and open problems}\label{sec:concl}

We have shown that the maximum overhang achieved using~$n$
homogeneous, frictionless blocks of unit length is at most
$6n^{1/3}$.
%UZ: Changed to 6n^{1/3}.
Thus, the constructions of \cite{PZ06} cannot be improved by more
than a constant factor, establishing order $n^{1/3}$ as the
asymptotic answer to the age-old overhang problem.

The discussions and results presented so far all referred to the
standard two-dimensional version of the overhang problem. Our
results hold, however, in greater generality. We briefly discuss
some natural generalizations and variants of the overhang problem
for which our bounds still apply.

In Section~\ref{sec:model} we stipulated that all blocks have a
given height~$h$. It is easy to see, however, that all our results
remain valid even if blocks have different heights, but still have
unit length and unit weight. In particular, blocks are allowed to
degenerate into \emph{sticks}, i.e., have height~$0$. Also, even
though we required blocks not to overlap, we did not use this
condition in any of our proofs.

\emph{Loaded stacks}, introduced in~\cite{PZ06}, are stacks composed
of standard unit length and unit weight blocks, and \emph{point
weights} that can have arbitrary weight. (Point weights may be
considered to be blocks of zero height and length, but nonzero
weight.) Our results, with essentially no change, imply that loaded
stacks of total weight~$n$ can have an overhang of at most
$6n^{1/3}$. %UZ: Changed to 6n^{1/3}.

What happens when we are allowed to use blocks of different lengths
and weights? Our results can be generalized in a fairly
straightforward way to show that if a block of length~$\ell$ has
weight proportional to~$\ell^3$, as would be the case if all blocks
were similar three-dimensional cuboids, then the overhang of a stack
of total weight~$n$ is again of order at most $n^{1/3}$. It is
amusing to note that in this case an overhang of order~$n^{1/3}$ can
be obtained by stacking $n$ unit-length blocks as in the
construction of~\cite{PZ06}, or simply by balancing a single block
of length~$n^{1/3}$ and weight~$n$ at the edge of the table!  Might
this mean that there is some physical principle that could have told
us, without all the calculations, that the right answer to the
original overhang problem had to be of order $n^{1/3}$?

%PW: I felt it was more important to point out that the $\ell^3$
%weight is a consequence of proportionality, not just cubes.
%UZ: But, it is still important to say that \ell^3 is what we
%would get in 3D. Added: similar three-dimensional cuboids.

Theorem~\ref{T-m2} supplies an almost tight upper bound for the
following variant of the overhang problem: How far away from the
edge of a table can a mass of weight~1 be supported using~$n$
%UZ2: Added 'of'
\emph{weightless} blocks of length~$1$, and a collection of point
weights of total weight $n$? The overhang in this case beats the
classical one by a factor of between $\log^{2/3} n$ and $\log n$.
% Here, overhang beats the classical
% case by a factor of between $\log n$ and $log^{2/3} n$.
%UZ: I slightly reworded the above paragraph.
%PW: As you see I put in the numbers.

In all variants considered so far, blocks were assumed to have their
largest faces parallel to the table's surface and perpendicular to
its edge. The assumption of no friction then immediately implied
that all forces within a stack are vertical, and our analysis, which
assumes that there are no horizontal forces, was applicable. A nice
argument, communicated to us by Harry Paterson, shows that in the
frictionless two-dimensional case, horizontal forces cannot be
present even if some of the blocks are tilted. Our results thus
apply also in this case.

\begin{figure}[th]
\begin{center}
\includegraphics[width=80mm]{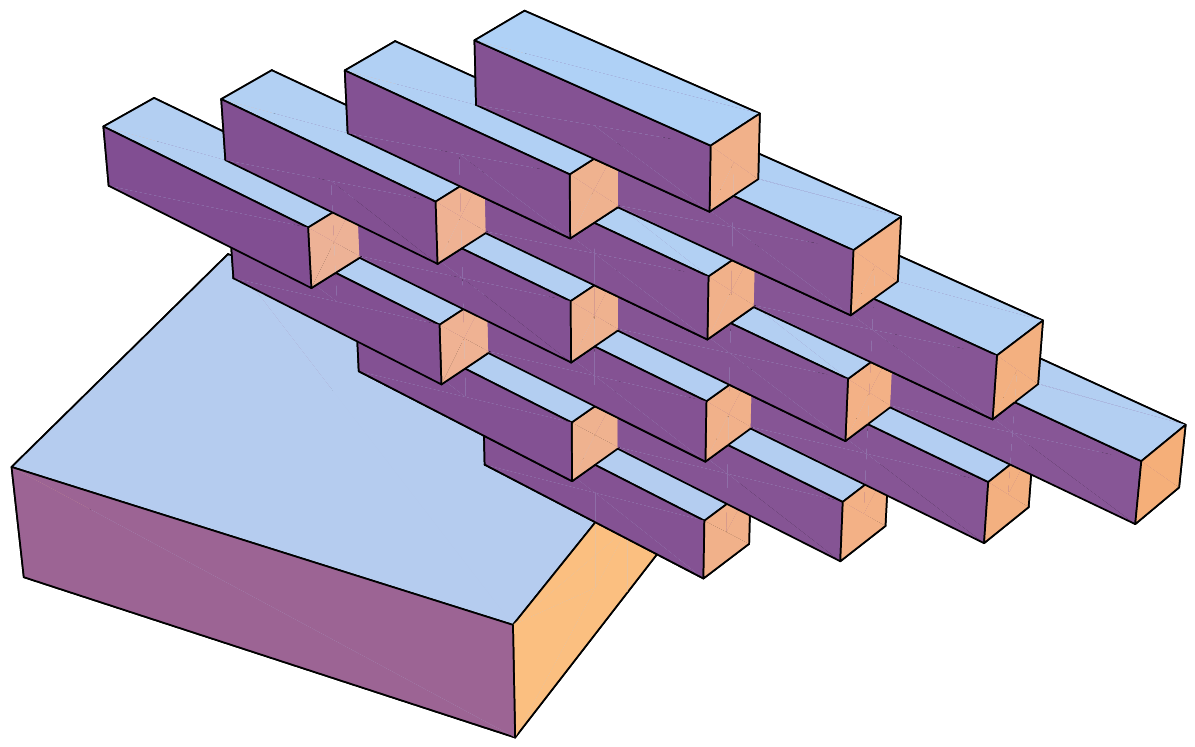}
\caption{A ``skintled'' $4$-diamond.} \label{fig:skintling}
\end{center}\vspace{-5mm}
\end{figure}

We believe that our bounds apply, with slightly adjusted constants,
also in three dimensions, but proving so remains an open problem.
Overhang larger by a factor of $\sqrt{1+w^2}$ may be obtained with
$1\times w\times h$ blocks, where $h\le w\le 1$, using a technique
called \emph{skintling} (see Figure~\ref{fig:skintling}). In
skintling (a term we learned from an edifying conversation with John
H. Conway about brick-laying) each block is rotated about its
vertical axis, so that---in our case---the diagonal of its bottom
face is perpendicular to the edge of the table.  With suitably
adjusted notion of length, however, our bounds apply to any
three-dimensional construction that can be balanced using vertical
forces. It is an interesting open problem whether there exist
three-dimensional stacks composed of frictionless, possibly tilted,
blocks that can only be balanced with the aid of some non-vertical
forces. (We know that this is possible if the blocks are
\emph{nonhomogeneous} and are of different sizes.) As mentioned, we
believe that our bounds do apply in three dimensions, even if it
turns out that non-vertical forces are sometimes useful, but proving
this requires some additional arguments.
%UZ: Changed 'argument' to 'arguments'.

We end by commenting on the tightness of the analysis presented in
this paper. Our main result is a~$6n^{1/3}$
%UZ: Changed to 6n^{1/3}.
%UZ2: Changes an to a.
upper bound on the overhang that may be obtained using~$n$ blocks.
As mentioned after the proof of Theorem~\ref{T-gen}, this bound can
be easily improved to about $4.5n^{1/3}$, for sufficiently large
values of~$n$. Various other small improvements in the constants are
possible. For example, a careful examination of our proofs reveals
that whenever we apply Lemma~\ref{L-extreme}, the distribution
$\mu_0$ contains at most three masses in the interval acted upon by
the move that produces~$\mu_1$. (This follows from the fact that a
block can rest upon at most three other blocks.) The constant~$3$
appearing in Lemma~\ref{L-extreme} can then be improved, though it
is optimal when no assumption regarding the distribution~$\mu_0$ is
made. We believe, however, that new ideas would be needed to reduce
%MT: removed superfluous ``the''
the upper bound to below, say, $3n^{1/3}$.

\begin{figure}[th]
\begin{center}
\includegraphics[width=100mm]{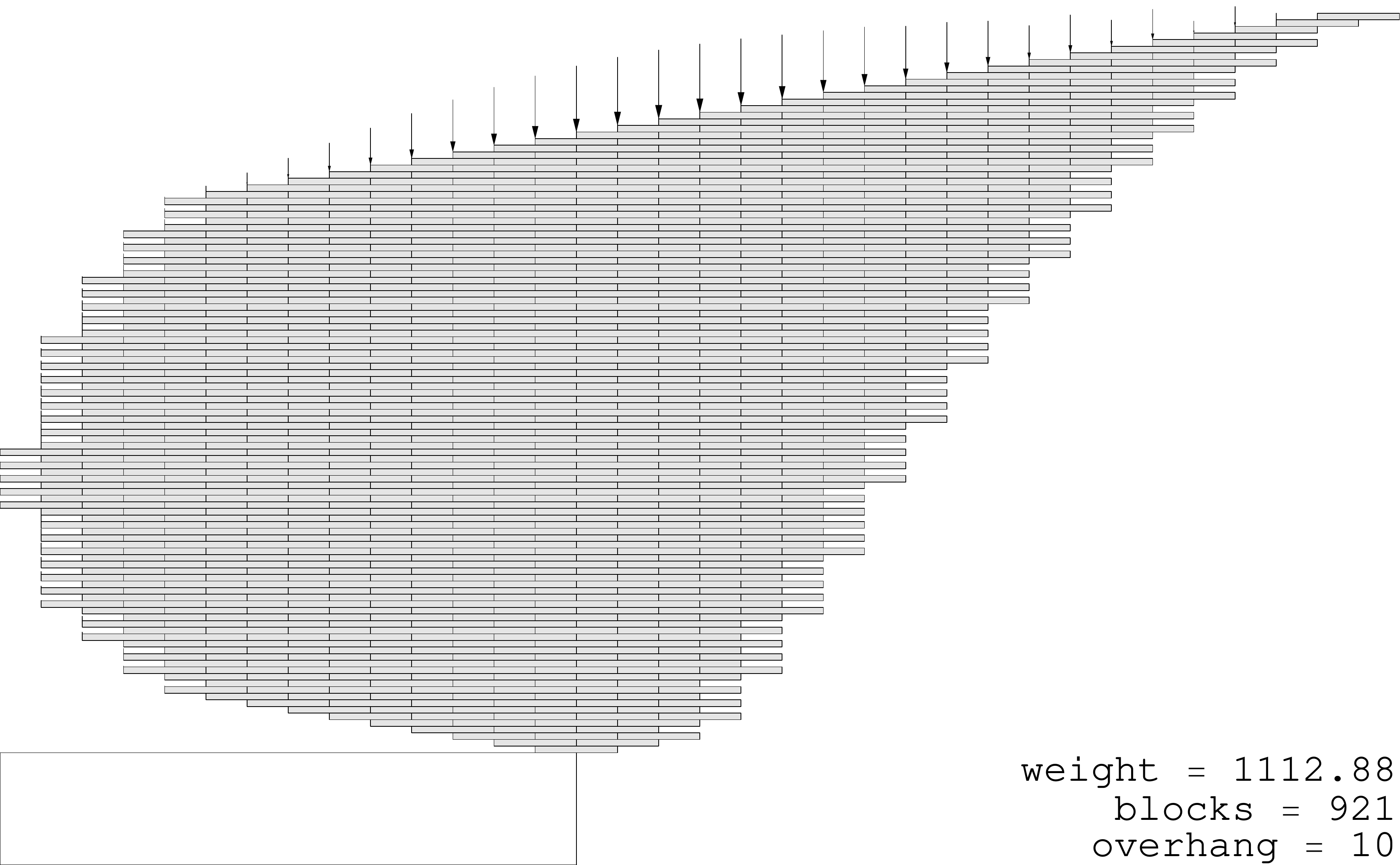}
\caption{An ``oil-lamp''-shaped stack} \label{fig:oil-lamp}
\end{center}\vspace{-5mm}
\end{figure}

As mentioned, Paterson and Zwick~\cite{PZ06} describe simple
balanced $n$-block stacks that achieve an overhang of about
$0.57n^{1/3}$. They also present some numerical evidence that
suggests that the overhang that can be achieved using~$n$ blocks,
for large values of~$n$, is at least $1.02n^{1/3}$.
These larger overhangs are obtained using stacks that are shaped
like the ``oil-lamp'' depicted in Figure~\ref{fig:oil-lamp}. For
more details on the figure and on ``oil-lamp'' constructions, see
\cite{PZ06}. (The stack shown in the figure is actually a loaded
stack, as defined above, with the external forces shown representing
the point weights.)

A small gap still remains between the best upper and lower bounds
currently available for the overhang problem, though they are both
of order $n^{1/3}$. Determining a constant~$c$ such that the maximum
overhang achievable using~$n$ blocks is asymptotically
$cn^{1/3}$ is a challenging open problem.

%PW: I changed "the constant" to "a constant" in deference to our
%inability to prove it exists.

\section*{Acknowledgements} We would like to thank John H. Conway, Johan H{\aa}stad, Harry Paterson, Anders
Thorup and Benjy Weiss for useful discussions
observations, some of which appear with due credit within the
paper.

\end{document}